\newtheorem{thm}{Theorem}
[section]
\newtheorem{cor}[thm]{Corollary}
\newtheorem{prop}[thm]{Proposition}
\theoremstyle{definition}
\newtheorem{rem}[thm]{Remark}
\newtheorem{fct}[thm]{Fact}
\newtheorem{defn}[thm]{Definition}
\newtheorem{rmk}[thm]{Remark}
\numberwithin{equation}{subsection}
\begin{document}

\title{ON THE ALGEBRAIC INDEPENDENCE OF  GENERIC PAINLEV\'E TRANSCENDENTS.}
\author{Joel Nagloo$^1$, Anand Pillay$^2$}
\date{September 5, 2012}
\maketitle 
\pagestyle{plain}
\begin{abstract} We prove that if $y''=f(y,y',t,\alpha,\beta,\ldots)$ is a generic Painlev\'e equation from among the classes $II$ to $V$, and if $y_1,\ldots,y_n$ are distinct solutions, then 
\newline
$tr.deg\left(\mathbb{C}(t)(y_1,y'_1,\ldots,y_n,y'_n)/\mathbb{C}(t)\right)=2n$. (This was proved by Nishioka for the single equation $P_{I}$.) 
For generic Painlev\'e VI, we have a slightly weaker result: $\omega$-categoricity (in the sense of model theory) of the solution space, as described below.
\end{abstract}
\footnotetext[1]{Supported by an EPSRC Project Studentship and a University of Leeds - School of Mathematics partial scholarship}
\footnotetext[2]{Supported by an EPSRC grant $EP/I002294/1$}
\section{Introduction}

In this paper we are concerned with algebraic relations over $\mathbb{C}(t)$ between solutions of a generic Painlev\'e equation. We direct the reader to our earlier paper \cite{NagPil} for a very detailed introduction but also for a summary of the model theoretic techniques. We conjectured there that for the generic Painlev\'e equations from each of the families $P_{I}-P_{VI}$ (see the list below), if $y_1,\ldots,y_n$ are solutions viewed as meromorphic functions on some disc $D\subset\mathbb{C}$ and if we work in the differential field $F$ of meromorphic functions on $D$ (which contains the differential subfield $\mathbb{C}(t)$ of rational functions), then $tr.deg\left(\mathbb{C}(t)(y_1,y'_1,\ldots,y_n,y'_n)/\mathbb{C}(t)\right)=2n$, that is $y_1,y'_1,\ldots,y_n,y'_n$ are algebraically independent over $\mathbb{C}(t)$. So there are ``no algebraic relations between distinct solutions (and their derivatives)". $P_{I}$ is a single equation, and the result was proved in this case by Nishioka\cite{Nishioka2}. In this paper we prove the conjecture for the families $P_{II} - P_{V}$. The situation for $P_{VI}$ is more delicate. It may  very well be the case that the conjecture is true there, but all we can prove is the following: given solutions $y_{1},..,y_{k}$ of generic $P_{VI}$ such that $tr.deg(\left(\mathbb{C}(t)(y_{1},y_{1}',\ldots, y_k, y'_k)/\mathbb{C}(t)\right) = 2k$, then for all other solutions $y$, except for at most $11k$, $tr.deg(\left(\mathbb{C}(t)(y_{1},y_{1}',\ldots, y_k, y'_k, y, y')/\mathbb{C}(t)\right) = 2(k+1)$.
(And it is well-known that  for any single solution $y$, $tr.deg(\left(\mathbb{C}(t)(y,y')/\mathbb{C}(t)\right) = 2$.)\\

There are other natural and related questions concerning algebraic relations between solutions of generic Painlev\'e equations from different families, some of which can be treated using methods of this paper, and this is being pursued by the first author.\\

\begin{equation*}
\begin{split}
P_{I}:\;\;\;\;\;  & \frac{d^2y}{dt^2}=6y^2+t \\
P_{II}(\alpha):\;\;\;\;\; & \frac{d^2y}{dt^2}=2y^3+ty+\alpha \\
P_{III}(\alpha,\beta,\gamma,\delta):\;\;\;\;\; & \frac{d^2y}{dt^2}=\frac{1}{y}\left(\frac{dy}{dt}\right)^2-\frac{1}{t}\frac{dy}{dt}+\frac{1}{t}(\alpha y^2+\beta)+\gamma y^3+\frac{\delta}{y} \\
P_{IV}(\alpha,\beta):\;\;\;\;\; & \frac{d^2y}{dt^2}=\frac{1}{2y}\left(\frac{dy}{dt}\right)^2+\frac{3}{2}y^3+4ty^2+2(t^2-\alpha)y+\frac{\beta}{y} \\
P_{V}(\alpha,\beta,\gamma,\delta):\;\;\;\;\; & \frac{d^2y}{dt^2}=\left(\frac{1}{2y}+\frac{1}{y-1}\right)\left(\frac{dy}{dt}\right)^2-\frac{1}{t}\frac{dy}{dt}+\frac{(y-1)^2}{t^2}\left(\alpha y+\frac{\beta}{y}\right)+\gamma\frac{y}{t}\\
& +\delta\frac{y(y+1)}{y-1}\\ 
P_{VI}(\alpha,\beta,\gamma,\delta):\;\;\;\;\; & \frac{d^2y}{dt^2}=\frac{1}{2}\left(\frac{1}{y}+\frac{1}{y+1}+\frac{1}{y-t}\right)\left(\frac{dy}{dt}\right)^2-\left(\frac{1}{t}+\frac{1}{t-1}+\frac{1}{y-t}\right)\frac{dy}{dt}\\
 & +\frac{y(y-1)(y-t)}{t^2(t-1)^2}\left(\alpha+\beta\frac{t}{y^2}+\gamma\frac{t-1}{(y-1)^2}+\delta\frac{t(t-1)}{(y-t)^2}\right)
\end{split}
\end{equation*}
where $\alpha,\beta,\gamma,\delta\in\mathbb{C}$.\\

In \cite{NagPil}, we proved a weak version of this algebraic independence conjecture which is valid for all generic Painlev\'e
equations. Namely we showed that if $y_1,\ldots,y_n$ are distinct solutions and if $y_1,y'_1,\ldots,y_n,y'_n$ are algebraically dependent over $\mathbb{C}(t)$, then already for some $1\leq i<j\leq n$, $y_i,y'_i,y_j,y'_j$ are algebraically dependent over $\mathbb{C}(t)$. This corresponds to the model theoretic notion ``geometric triviality" and the proof consisted of combining results by the Japanese school on ``irreducibility" of the Painlev\'e equations, with nontrivial results in the model theory of differentially closed fields (such as the trichotomy
theorem for strongly minimal sets).\\

The proofs of the main results in the current paper have three ingredients; (i) the ``geometric triviality" results from \cite{NagPil}, (ii) the description/classification  (in the literature) of algebraic solutions of the Painlev\'e equations in the various families, as the parameters vary, depending also on the understanding of the relevant Backlund/Okamoto transformations, and (iii) elementary model theoretic considerations, specifically quantifier elimination for $DCF_{0}$.
So overall  by combining the existing global structural analysis of the Painlev\'e families  (i.e. irreducibility, and existence of algebraic solutions, as parameters vary) with both nontrivial and elementary model theory of differential fields, we obtain definitive information about ``generic" Painlev\'e equations.\\

 
In section 2 we give  precise definitions of the various notions we will be using, concentrating on the case at hand. In the third section we prove the main conjecture for the Painlev\'e equations $P_{II}-P_{V}$ (Propositions \ref{P1}, \ref{P2}, \ref{P3}, \ref{P4}), by first in each case describing the classification of algebraic solutions. In the final section, we deal with $P_{VI}$ which is more delicate; we obtain the weaker statement mentioned above (Proposition \ref{P5}).\\

Many thanks are due to Philip Boalch and Marta Mazzocco for pointing out, and explaining the significance of,  Boalch's ``generic icosahedral solution" to Painlev\'e VI, which is precisely what is needed to prove our results for generic Painlev\'e VI. 
\section{Preliminaries}

Note that apart from $P_{I}$ which is a single equation, each of the other families is parametrized by a tuple $(\alpha, \beta,..)$ of complex numbers. We will say that an equation in one of these families is ``generic" if the corresponding tuple of complex numbers is an algebraically independent tuple of transcendental complex numbers. In fact $P_{III}$ is essentially a $2$-parameter family, and $P_{V}$ a $3$-parameter family (see \cite{NagPil}). \\

The reader is free to view a ``solution" to any of the equations above, as a meromorphic function on some open connected set $D\subseteq \mathbb{C}$. 
The collection of all meromorphic functions on $D$ (equipped with $d/dt$) is a differential field containing $\mathbb{C}(t)$ and as such one can discuss transcendence questions. One the other hand it is natural to think, more generally,  of a solution as  an element $y$ of an arbitrary differential field
$(F, \partial)$ extending $(\mathbb{C}(t),d/dt)$  (which solves the equation in the obvious sense).  This is the point of view of the Japanese school in their study of irreducibility for example, and will also be the point of view of the current paper.\\

In fact it will be important for us to work in the framework of an ambient differentially closed field $\mathcal{U}$, one of the reasons being that the first order theory $DCF_{0}$ of differentially closed fields of characteristic $0$ has quantifier elimination: any first order formula $\phi(x_{1},..,x_{n})$ is equivalent to a quantifier-free formula. The underlying ``language" here is that of differential rings $\{+,-,\cdot,0,1,\partial\}$. We will call this language $L$ when there is no chance for ambiguity. See \cite{Marker} for more details on the model theory of differential fields, in particular the $\omega$-stability of 
$DCF_{0}$. Section 2 of \cite{NagPil} discusses in detail model-theoretic notions and results relevant to the work in this paper. But in fact the current paper will not require so much model theory, mainly just the results (rather than background) from \cite{NagPil}. For a field $L$, $L^{alg}$ denotes its algebraic closure (in the field-theoretic sense). We will discuss later the model-theoretic notion of algebraic closure and its meaning in 
differentially closed fields. But we try to keep this paper relatively self-contained. \\

We will now give the relevant notions, in the special case of equations over $\mathbb{C}(t)$ of the form $y'' = f(y,y')$ with $f$ a rational function (where eventually this will be one of the Painlev\'e equations). We take $(\mathcal{U},\partial)$ to be a ``saturated" (or universal, in the sense of Kolchin) differentially closed field  of cardinality the continuum, and all differential fields we consider will be sub differential fields of $\mathcal{U}$. Without loss of generality the field of constants of $\mathcal{U}$ is $\mathbb{C}$. We take $t\in\mathcal{U}$ with $\partial(t) =1$. Let $F_{0}$ be a finitely generated subfield of $\mathbb{C}$ We fix an $ODE$, $y'' = f(y,y',t)$ where $f$ is a rational function (in $y$, $y'$, $t$) over $F_{0}$. Let $K_{0} = F_{0}(t)$ (field of rational functions over $F_{0}$), and we also let $K$ denote $\mathbb{C}(t)$ (field of rational functions over $\mathbb{C}$). 
Let $X$ be the set of solutions of the equation $y'' = f(y,y',t)$ in $\mathcal{U}$.

\begin{defn} Let $L$ be a differential field containing $K_{0}$ (e.g. $L = K$). We call a solution $y\in X$ {\em generic} over $L$ if $tr.deg\left(L(y,y')/L\right)=2$, and call solutions $y_{1},..,y_{n}\in X$ {\em mutually generic} if $tr.deg\left(L(y_1,y'_1,\ldots,y_n,y'_n)/L\right)=2n$
\end{defn}

\begin{rem} If $L$ is countable then by saturation of $\mathcal{U}$ there will in fact exist a continuum of mutually generic solutions of $X$ over $L$
\end{rem}

\begin{defn}  $X$  (or the equation) is said to be {\em strongly minimal} if for any differential field $L$ containing $K_{0}$, and $y\in X$, either $y\in L^{alg}$, or $y$ is generic over $L$.
\end{defn}
 \noindent
{\em Explanation and commentary.}  This is equivalent to any definable subset of $X$ being finite or cofinite, and the latter is the {\em definition} of a strongly minimal definable set $X$ in an arbitrary structure. For example the set of constants is strongly minimal in $\mathcal{U}$. 
But under the current assumptions on $X$  strong minimality corresponds to the equation $y'' = f(y,y',t)$ satisfying Umemura's $J$-condition. See \cite{NagPil} for more details. Note that if $X$ is strongly minimal, $L > K_{0}$, $y_{1},..,y_{n}\in X$ and $y_{i+1}\notin L(y_{1},y_{1}',..,y_{i},y_{i}')^{alg}$ for $i=1,..,n-1$ then $y_{1},..,y_{n}$ are mutually generic over $L$. \\

\begin{rem} Suppose that $X$ is strongly minimal and $y_{1},..,y_{n}\in X$ are mutually generic over $K_{0}$. Then they are also mutually generic over 
$K = \mathbb{C}(t)$. In particular if $y\in X$ is in $K^{alg}$ then it is already in $K_{0}^{alg}$. 
\end{rem}
\noindent
{\em Explanation.} This is because $X$, being strongly minimal and of order $2$ is ``orthogonal" to the constants $\mathbb{C}$, the latter being a strongly minimal set of order $1$. See \cite{NagPil}. \\

\begin{defn} Suppose $X$ is strongly minimal. We say that $X$ is {\em geometrically trivial} if whenever $y_{1},..,y_{n} \in X$ are pairwise mutually generic over $K_{0}$ then they are mutually generic over $K_{0}$.
\end{defn}
\noindent
{\em Commentary.}  This is equivalent to saying: for any $L> K_{0}$ whenever $y_{1},..,y_{n}\in X$ are pairwise mutually generic over $L$, then they are mutually generic over $L$.\\

\begin{defn} Let $L> K_{0}$. We say that $X$ is {\em strictly disintegrated} over $L$, if whenever $y_{1},..,y_{n}\in X$ are distinct, then they are mutually generic over $L$. 
\end{defn}
\noindent
{\em Commentary.}  Note that strict disintegratedness over $L$ of $X$ implies strong minimality of $X$. It also implies that no solution is in $L^{alg}$.
As in Remark 2.4 we have that $X$ is strictly distintegrated over $K_{0} = F_{0}(t)$ iff it is strictly disintegrated over $K = \mathbb{C}(t)$. 
Finally note that strict disintegratedness of $X$ over $L$ implies that any permutation of $X$ extends to an automorphism of the differential field $\mathcal{U}$ which fixes $L$ pointwise. \\

Now we mention $\omega$-categoricity (which is  what we will prove for generic $P_{VI}$). The notion is treated in some detail in \cite{NagPil}, but here we only consider it in the strongly minimal context. 
\begin{defn} Suppose $X$ is strongly minimal. We say that $X$ is {\em $\omega$-categorical}, if whenever $y_{1},..,y_{k}\in X$ and 
$L$ is the differential field generated by $K_{0}$ and $y_{1},..,y_{k}$, then only finitely many $y\in X$ are in $L^{alg}$. 
\end{defn}
\noindent
{\em Commentary.} As remarked in \cite{NagPil} this definition does not depend on the choice of the finitely generated subfield $F_{0}$ of $\mathbb{C}$ over which $f(y,y',t)$ is defined. Moreover, as in Remark 2.4, we can replace $K_{0}$ by $K = \mathbb{C}(t)$. Clearly if (strongly minimal) $X$ is strictly disintegrated over $K_{0}$, then $X$ is $\omega$-categorical, and in turn $\omega$-categoricity  implies geometric triviality. (See \cite{NagPil} for discussion of the last implication which is rather specific to $DCF_{0}$.)\\ 

Now let us mention the main results from \cite{NagPil} in the light of the above definitions.  For each of the Painlev\'e equations $y'' = f(y,y',t,\alpha, \beta, ..)$ we will take $F_{0}$ to be the subfield of $\mathbb{C}$ generated by the parameters $\alpha, \beta,...$, so 
$K_{0} = \mathbb{Q}(\alpha, \beta,..)(t)$.

\begin{fct} Suppose $y'' = f(y,y',t,..)$ is a generic Painlev\'e equation in any of the classes $I - VI$. Let $X$ be the solution set. Then $X$ is strongly minimal, there are no algebraic (over $K_{0}$, so also over $K$) solutions, and moreover $X$ is geometrically trivial.
\end{fct}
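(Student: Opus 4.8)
The statement asserts three things about the solution set $X$ of a generic Painlev\'e equation: that $X$ is strongly minimal, that no solution is algebraic over $K_{0}$ (hence, by Remark 2.4, none over $K$), and that $X$ is geometrically trivial. The plan is to extract the first two from the irreducibility theorems of the Japanese school, and then to obtain geometric triviality from them together with the trichotomy theorem for strongly minimal sets in $DCF_{0}$.

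The input from the literature is the theorem of Nishioka, Umemura and Watanabe that, for generic values of the parameters, each of $P_{I}$--$P_{VI}$ is irreducible: no solution lies in a ``classical'' differential field extension of $K_{0}$ --- and, in the base-change-stable form packaged by Umemura's $J$-condition, none lies in a classical extension of any differential field $L\supseteq K_{0}$ --- where a classical extension is a finite tower built from algebraic extensions, Picard--Vessiot (linear ODE) extensions, and strongly normal extensions. In particular no solution is algebraic over $K_{0}$, an algebraic extension being the simplest kind of classical one; this is of course also visible from the classifications of algebraic solutions recalled in the later sections, which require special, algebraic, parameter values. To get strong minimality, suppose for contradiction that some $y\in X$ has $tr.deg(L(y,y')/L)=1$ for some $L\supseteq K_{0}$ with $y\notin L^{alg}$. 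Then $y$ satisfies a first-order algebraic differential equation over $L$, and by the classical theory of such equations $y$ is either algebraic over $L$ or lies in a classical (Picard--Vessiot or strongly normal) extension of $L$ --- either way contradicting irreducibility over $L$. Hence $X$ is strongly minimal, and the last clause of the Fact then follows from Remark 2.4.

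Now for geometric triviality. Since $X$ is strongly minimal and the equation has order $2$, $X$ is orthogonal to the constants $\mathbb{C}$ (a strongly minimal set of order $1$): a nonorthogonality would, as in the explanation after Remark 2.4, force some solution to have transcendence degree $1$ over some differential field extending $K_{0}$, contradicting strong minimality. By the trichotomy theorem for strongly minimal sets in $DCF_{0}$ (Hrushovski--Sokolovi\'c, building on Buium), $X$, being not nonorthogonal to the constants, is therefore locally modular, and so is either trivial --- i.e.\ geometrically trivial --- or nonorthogonal to the Manin kernel $A^{\sharp}$ of a simple abelian variety $A$ not descending to the constants. In the latter case some solution of $X$ would be interalgebraic over a differential field $L\supseteq K_{0}$ with a point of $A^{\sharp}$, hence would lie in a strongly normal (``abelian'') extension of $L$, again a classical extension, contradicting irreducibility over $L$. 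So $X$ is geometrically trivial.

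The real work --- and the main obstacle --- is not the trichotomy, which is then applied essentially formally, but the translation between the analytically flavoured notion of a ``classical'' solution and the three model-theoretic phenomena it has to absorb: solutions of first-order algebraic ODEs, solutions controlled by linear differential equations, and solutions interalgebraic with a point of a Manin kernel. One must also be sure that the form of irreducibility available is stable enough under base change that it excludes classical solutions over every $L\supseteq K_{0}$, not merely over $K_{0}$ itself; this robustness is exactly what Umemura's $J$-condition provides. Setting up these correspondences carefully is the content of \cite{NagPil}; granted them, the Fact follows as sketched.
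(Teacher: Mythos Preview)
Your sketch is correct and matches the paper's approach: the paper records this as a Fact whose commentary simply cites \cite{NagPil} and the Japanese school, and you have accurately unpacked the argument behind those citations (strong minimality from the $J$-condition/irreducibility work, absence of algebraic solutions from the parameter classifications, and geometric triviality via the trichotomy theorem with the Manin-kernel branch excluded by irreducibility). One small terminological point: in the paper's usage Umemura's $J$-condition \emph{is} strong minimality (see the commentary after Definition~2.3), verified directly by the Japanese school, rather than a base-change-stable form of ``no classical solutions''; your derivation of strong minimality from the latter via the classical theory of order-$1$ ODEs is also valid, and is in fact how one shows the two formulations coincide.
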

\noindent
{\em Commentary.}  Strong minimality was noted in \cite{NagPil}, as a consequence of the work of the Japanese school on ``irreducibility"  (they classified the parameters for which the corresponding equation has Umemura's J-property, and the latter is equivalent to strong minimality). Likewise the parameters for which there exists algebraic (over $\mathbb{C}(t)$) solutions of the corresponding equations have been classified. References will appear throughout this paper. Geometric triviality was the main result of \cite{NagPil}: 
see Propositions 3.1, 3.6, 3.9, 3.12, 3.15, and 3.18 in that paper.\\

\vspace{5mm}
\noindent
We will prove that generic Painlev\'e equations in class $II - V$ are strictly disintegrated over $K_{0}$, and thus over $K = \mathbb{C}(t)$. By virtue of Remark 2.4, it will suffice to prove that any two solutions $y_{1}, y_{2}\in X$ are mutually generic over $K_{0}$.
Likewise to prove that generic $P_{VI}$ is $\omega$-categorical it will suffice to prove that for a solution $y$ of $X$, there are only finitely many other solutions in $K_{0}(y,y')^{alg}$. \\

\vspace{5mm}
\noindent
Let us finish this section with a few comments on the very basic model-theoretic context, in particular ``algebraic closure" in $\mathcal{U}$, as our proofs will be from this point of view.  Given a structure $M$ for a countable language $L$ (where we identify $M$ notationally with its underlying set or universe), a subset $A$ of (the underlying set of) $M$ and a finite tuple $a$ from $M$, we say that ``$a$ is algebraic over $A$ in $M$, in the sense of model theory", if there is a formula $\phi(x)$ with parameters from $A$ such that $M\models \phi(a)$, and there are only finitely many other tuples $b$ such that $M\models \phi(b)$.  Given no ambiguity about the ambient structure $M$, by $acl(A)$ we mean the set of all tuples which are algebraic over $A$ in $M$. If $X$ is a definable subset of $M^{n}$, defined over $A$, then by $acl_{X}(A)$ we mean $acl(A)\cap X$, the set of elements of $X$ which are algebraic over $A$. It should be mentioned that sometimes $acl(A)$ is used to denote the set of {\em elements} of the structure $M$ which are algebraic over $A$, but there is no real ambiguity as a finite tuple is algebraic over $A$ if and only if each of its coordinates is algebraic over $A$.  We also have the notion $dcl(A)$, definable closure of $A$ in $M$, which is above except $a$ should be the unique realization of $\phi$ in $M$.\\

As in the commentary to Definition 2.3,  a definable set $X$ (in $M$) is strongly minimal if it is infinite and any definable subset is finite or cofinite. Suppose $X$ is strongly minimal and definable over a finite set $A$. Assume some degree of ``saturation" of $M$, as well as stability of $Th(M)$. Then $acl(-)$ has the following exchange property: if $b,c\in X$, $b\notin acl(A)$, and $c\in acl(A,b)$ then $b\in acl(A,c)$. Geometric triviality of $X$ amounts to: whenever
$b_{1},..,b_{n} \in X\setminus acl(A)$, and $b_{i}\notin acl(A,b_{j})$ whenever $i\neq j$, then for each $i=1,..,n$, $b_{i}\notin acl(A,b_{1},..,b_{i-1}, b_{i+1},..,b_{n})$.  And $\omega$-categoricity of $X$ amounts to: for any finite subset $B$ of $X$, $acl_{X}(A\cup B)$ is finite.
\\

Now let us consider our differential closed field $\mathcal{U}$. We have:
\begin{fct} Let $A$ be a subset of $\mathcal{U}$. Then
\newline
(i) $dcl(A)$ is the differential subfield of $\mathcal{U}$ generated by $A$,
\newline
(ii) $acl(A)$ is $dcl(A)^{alg}$ the field-theoretic algebraic closure of $dcl(A)$ (which will also be a differential subfield of $\mathcal{U}$). 
\end{fct}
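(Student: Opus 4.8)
\noindent
\emph{Proof proposal.} First I would reduce to the case where $A$ is a differential subfield of $\mathcal{U}$. Let $K$ be the differential subfield generated by $A$; every element of $K$ is built from $A$ and the prime field by the $L$-operations $+,-,\cdot,\partial$ together with division, and $z=x/y$ is definable ($z$ is the unique element with $zy=x\wedge y\neq 0$), so $A\subseteq K\subseteq dcl(A)$. By monotonicity and idempotence of $dcl$ this forces $dcl(A)=dcl(K)$, and since always $acl(A)=acl(dcl(A))$ also $acl(A)=acl(K)$. So it suffices to prove $dcl(K)=K$ and $acl(K)=K^{alg}$. I would note at once that $K^{alg}$ is a differential subfield: in characteristic $0$, $K^{alg}/K$ is separable algebraic, so $\partial|_K$ extends \emph{uniquely} to a derivation of $K^{alg}$ (forced by differentiating a minimal polynomial), and this unique extension is therefore $\partial|_{K^{alg}}$.

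The two standard tools I would use are: (a) since $\mathcal{U}$ is saturated (and $|A|<|\mathcal{U}|$, as in all the applications), $dcl(K)$ is the fixed set of $\operatorname{Aut}(\mathcal{U}/K)$, and $a\in acl(K)$ iff the orbit $\operatorname{Aut}(\mathcal{U}/K)\cdot a$ is finite; and (b) since $DCF_0$ has quantifier elimination, $\operatorname{tp}(a/K)$ is determined by the prime differential ideal $I(a/K)=\{f\in K\{x\}:f(a)=0\}$, and any $K$-isomorphism between small differential subfields of $\mathcal{U}$ is elementary, hence extends to a member of $\operatorname{Aut}(\mathcal{U}/K)$. Granting these, fix $a\in\mathcal{U}$. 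If $a\in K^{alg}$, any $\sigma\in\operatorname{Aut}(\mathcal{U}/K)$ fixes the coefficients of the minimal polynomial $m$ of $a$, hence maps $a$ to a root of $m$; so the orbit of $a$ is finite, giving $K^{alg}\subseteq acl(K)$, and if $a\notin K$ then, extending a field $K$-isomorphism $K(a)\to K(a')$ to a field automorphism $\tau$ of $K^{alg}$ (for $a'\neq a$ another root of $m$), $\tau$ commutes with $\partial$ by the uniqueness of the derivation extension, so is a differential $K$-automorphism, extends to $\operatorname{Aut}(\mathcal{U}/K)$, and moves $a$. Hence $dcl(K)\cap K^{alg}=K$.

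It remains to treat $a\notin K^{alg}$, i.e.\ $a$ field-transcendental over $K$; I would show $\operatorname{tp}(a/K)$ has infinitely many realizations, which gives $a\notin acl(K)$ (so $acl(K)\subseteq K^{alg}$) and a fortiori $a\notin dcl(K)$ (so, with the previous paragraph, $dcl(K)=K$). If $a$ is differentially transcendental over $K$, then $I(a/K)=(0)$, the realizations of $\operatorname{tp}(a/K)$ are precisely the differentially transcendental elements of $\mathcal{U}$ over $K$, and there are continuum-many of those by saturation. If $a$ is differentially algebraic over $K$, let $p\in K\{x\}$ be a minimal differential polynomial of $a$ over $K$ — of least order $n$, then least degree in the leading derivative; $a\notin K^{alg}$ forces $n\geq 1$. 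By the classical Ritt--Kolchin theory of the generic zero of an irreducible differential polynomial, $I(a/K)=[p]\!:\!S_p^{\infty}$ with $S_p$ the separant, depending only on $p$, so every solution in $\mathcal{U}$ of $p(x)=0\wedge S_p(x)\neq 0$ realizes $\operatorname{tp}(a/K)$; and this definable set is infinite, since an irreducible differential polynomial of order $\geq 1$ has infinitely many zeros in a differentially closed field. Finally, undoing the reduction, $acl(A)=acl(K)=K^{alg}=dcl(A)^{alg}$, which is what (i) and (ii) assert.

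Everything except one point is elementary field theory or the standard $dcl/acl$-via-automorphisms formalism for saturated models; the substantive input, which I would cite from \cite{Marker} rather than reprove, is the model theory of $DCF_0$ used above — quantifier elimination with the resulting bijection between $1$-types over $K$ and prime differential ideals of $K\{x\}$, the Ritt--Kolchin description of $I(a/K)$, and the infinitude of the generic-solution set of an order-$\geq 1$ equation. I expect the last of these — ruling out that $\operatorname{tp}(a/K)$ could be algebraic when $a$ is differentially algebraic but field-transcendental over $K$ — to be the only step requiring more than formal manipulation, and hence the main obstacle.
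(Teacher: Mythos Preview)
The paper does not prove this statement at all: it is recorded as a \emph{Fact} and implicitly referred to the background reference \cite{Marker}, so there is no ``paper's proof'' to compare against. Your argument is correct and is essentially the standard one (reduce to a differential subfield $K$; use quantifier elimination so that types over $K$ correspond to prime differential ideals of $K\{x\}$; handle $a\in K^{alg}$ by conjugate roots, and $a\notin K^{alg}$ by exhibiting infinitely many realizations via the Ritt--Kolchin description of $I(a/K)$). One minor caveat: your use of the automorphism characterisation of $dcl$/$acl$ requires $|A|<|\mathcal{U}|$, which you note parenthetically; the statement as written in the paper is for arbitrary $A$, but your later argument (showing directly that $\operatorname{tp}(a/K)$ has infinitely many realizations when $a\notin K^{alg}$, and that all roots of the minimal polynomial realize $\operatorname{tp}(a/K)$ when $a\in K^{alg}\setminus K$) in fact avoids the automorphism step and works without any smallness hypothesis, so the proof goes through in full generality.
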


Reverting to the case where $X$ is the solution set of $y' = f(y,y',t)$ with $f$ rational over $F_{0}$ and $K_{0} = F_{0}(t)$:
\begin{fct} Let $y_1,..,y_n, y \in X$. Then $y$ is generic in $X$ over the (differential) field $K_{0}(y_{1}, y_{1}',..,y_{n}, y_{n}')$ if and only if $y\notin acl(K_{0},y_{1},..,y_{n})$ in the model-theoretic sense.
\end{fct}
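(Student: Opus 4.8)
The statement (Fact 2.9) is really a translation exercise between the Kolchin/transcendence language and the model-theoretic one, relying on Fact 2.8. The plan is as follows. First I would set $L = K_{0}(y_{1},y_{1}',\ldots,y_{n},y_{n}')$, which by Fact 2.8(i) is exactly $dcl(K_{0},y_{1},\ldots,y_{n})$, since each $y_i'$ is a derivative of $y_i$ and hence lies in the differential field generated by $K_0$ and the $y_i$. Then the model-theoretic content ``$y\notin acl(K_{0},y_{1},\ldots,y_{n})$'' becomes, by Fact 2.8(ii), ``$y\notin L^{alg}$'' (field-theoretic algebraic closure), since $acl(K_0,y_1,\ldots,y_n) = dcl(K_0,y_1,\ldots,y_n)^{alg} = L^{alg}$.

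So it remains to show: $y$ is generic over $L$, i.e. $tr.deg(L(y,y')/L) = 2$, if and only if $y \notin L^{alg}$. One direction is immediate: if $y \in L^{alg}$ then $y$ is algebraic over $L$, and since $y'$ lies in the differential field $L(y,y',\ldots)$ but more to the point $y' \in L(y)^{alg}$ (differentiating a minimal polynomial for $y$ over $L$ expresses $y'$ as a rational function of $y$ over $L$, using $y'' = f(y,y',t)$ to stay within order $\le 1$; more simply, $L(y,y')$ has transcendence degree equal to $tr.deg(L(y)/L) \le 1$ over $L$), whence $tr.deg(L(y,y')/L) \le 1 < 2$, so $y$ is not generic. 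For the converse, suppose $y \notin L^{alg}$, so $y$ is transcendental over $L$ and $tr.deg(L(y,y')/L) \ge 1$. I must rule out $tr.deg(L(y,y')/L) = 1$, i.e. that $y'$ is algebraic over $L(y)$. This is precisely where strong minimality of $X$ enters: by Fact 2.1 (applied in \cite{NagPil}) the Painlev\'e equations we consider are strongly minimal, and for a strongly minimal $X$ defined by an order-$2$ equation, any solution $y$ not in $L^{alg}$ satisfies $tr.deg(L(y,y')/L) = 2$ — this is built into Definition 2.3 and the remark following it. Indeed Definition 2.3 literally says: for strongly minimal $X$, either $y \in L^{alg}$ or $y$ is generic over $L$. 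So the converse is just an invocation of strong minimality.

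The only genuine subtlety — and the step I would be most careful about — is that Fact 2.9 is stated for a general equation $y'' = f(y,y',t)$ with $f$ rational over $F_0$, \emph{without} the standing hypothesis of strong minimality. Re-reading the statement, though, it is being used in this paper only for the (generic Painlev\'e) cases where strong minimality does hold by Fact 2.1, so I would either (a) add ``assume $X$ is strongly minimal'' to the hypotheses, or (b) observe that in the non-strongly-minimal case one at least still has the implication ``$y \notin L^{alg}$ $\Rightarrow$ $tr.deg(L(y,y')/L) \ge 1$'' and the reverse implication always, and that the full biconditional is exactly the statement that $X$ behaves like a strongly minimal set over $L$. Since the paper's usage is entirely within the strongly minimal setting (Propositions \ref{P1}--\ref{P5} all concern generic Painlev\'e equations), option (a) is cleanest: with strong minimality in force, the equivalence is immediate from Definition 2.3 together with Fact 2.8, and there is essentially nothing further to prove.
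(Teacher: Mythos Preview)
Your argument is correct and is exactly the intended unpacking; the paper states this as a \emph{Fact} without proof, treating it as a routine translation via the description of $dcl$ and $acl$ (the paper's Fact 2.9) together with Definition 2.3. Two minor points: your numbering is off by one throughout (what you call Fact 2.8 is the paper's Fact 2.9, your ``Fact 2.1'' does not exist and you mean Fact 2.8, and the statement itself is Fact 2.10), and your observation that the converse direction genuinely requires strong minimality of $X$ is well taken --- the paper leaves this hypothesis tacit, since every use of the Fact is in the strongly minimal (generic Painlev\'e) setting.
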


\vspace{2mm}
\noindent
A trivial model-theoretic fact we use is expressibility of $acl$:  namely in an arbitrary structure $M$ for language $L$, if $c\in acl(A)$ then there is an $L$-formula $\phi(x,y)$, and tuple $a$ from $A$, such that $M\models \phi(c,a)$, and whenever  $c_{1},a_{1}$ are from $M$ such that $M\models\phi(c_{1},a_{1})$ then $c_{1} \in acl(a_{1})$. Our main argument will combine this with the following equally basic fact about $DCF_{0}$.
\begin{fct}  Let $\phi(x_{1},x_{2},...,x_{n},y)$ be a formula in the language of differential fields. Suppose $\alpha_{1}, \alpha_{2}, ..,\alpha_{n}$ are algebraically independent complex numbers such that $\mathcal{U} \models \phi(\alpha_{1}, \alpha_{2},.., \alpha_{n},t)$. Then for all but finitely many $c\in \mathbb{C}$, we have
$\mathcal{U} \models \phi(c,\alpha_{2},..,\alpha_{n},t)$. 
\end{fct}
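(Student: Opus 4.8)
The plan is to eliminate quantifiers and then exploit the fact that substituting a constant for a variable annihilates all of its derivatives, so that the differential-polynomial conditions cutting out $\phi$ collapse to ordinary one-variable polynomial conditions. Since $DCF_{0}$ has quantifier elimination, $\phi(x_{1},\ldots,x_{n},y)$ is equivalent modulo $DCF_{0}$ to a quantifier-free formula, that is, to a Boolean combination $B$ of conditions $P_{j}(x_{1},\ldots,x_{n},y)=0$ ($j=1,\ldots,m$) where each $P_{j}$ is a differential polynomial with integer coefficients. I would then carry out three successive substitutions. Putting $y=t$: since $\partial t=1$ we have $\partial^{(k)}t=0$ for $k\geq 2$, so each $P_{j}(x_{1},\ldots,x_{n},t)$ becomes a differential polynomial in $x_{1},\ldots,x_{n}$ with coefficients in $\mathbb{Q}[t]$. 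Putting $x_{i}=\alpha_{i}$ for $i=2,\ldots,n$: as $\partial\alpha_{i}=0$ all their derivatives vanish, leaving a differential polynomial in the single variable $x_{1}$ over the differential subfield $k:=\mathbb{Q}(\alpha_{2},\ldots,\alpha_{n})(t)$ of $\mathcal{U}$. Finally, evaluating at a constant $x_{1}=c$ (so $\partial^{(k)}c=0$ for $k\geq 1$) involves only $c$ itself, so we obtain ordinary polynomials $Q_{1},\ldots,Q_{m}\in k[X]$ with the property that, for every $c$ in the constant field $\mathbb{C}$, $\mathcal{U}\models\phi(c,\alpha_{2},\ldots,\alpha_{n},t)$ holds if and only if $B$ evaluates to ``true'' on the truth values of the conditions $Q_{1}(c)=0,\ldots,Q_{m}(c)=0$.

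Next I would bring in the genericity of $\alpha_{1}$. Since $\alpha_{1},\ldots,\alpha_{n}$ are algebraically independent and $t$ is transcendental over $\mathbb{C}$ (forced by $\partial t=1$ together with the algebraic closedness of $\mathbb{C}$), the tuple $\alpha_{1},\ldots,\alpha_{n},t$ is algebraically independent over $\mathbb{Q}$, so $\alpha_{1}$ is transcendental over $k$. Hence $Q_{j}(\alpha_{1})=0$ if and only if $Q_{j}$ is the zero polynomial, and since $\mathcal{U}\models\phi(\alpha_{1},\ldots,\alpha_{n},t)$ by hypothesis, $B$ evaluates to ``true'' on the sequence of truth values $[\,Q_{j}\equiv 0\,]$. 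But any $c\in\mathbb{C}$ which is not a root of any of the $Q_{j}$ that are not identically zero satisfies $[\,Q_{j}(c)=0\,]=[\,Q_{j}\equiv 0\,]$ for all $j$, hence $B$ evaluates to ``true'' there as well, i.e. $\mathcal{U}\models\phi(c,\alpha_{2},\ldots,\alpha_{n},t)$. The remaining $c$ all lie in the zero set of the nonzero polynomial $\prod_{\,Q_{j}\not\equiv 0\,}Q_{j}\in k[X]$, which is finite, and the statement follows.

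There is no real obstacle here beyond the bookkeeping of the three substitutions; the one point worth flagging is that $t\notin\mathbb{C}$ is harmless, because the $Q_{j}$ still live in $k[X]$ for a genuine subfield $k$ of $\mathcal{U}$, so a nonzero $Q_{j}$ has only finitely many roots. One could alternatively argue more abstractly that $\{c\in\mathbb{C}:\mathcal{U}\models\phi(c,\alpha_{2},\ldots,\alpha_{n},t)\}$ is a definable subset of the stably embedded constant field, whose induced structure is that of a pure algebraically closed field, hence finite or cofinite, and is not finite since it contains $\alpha_{1}$, which is transcendental over the parameters defining it; but the explicit argument above is preferable as it keeps matters self-contained.
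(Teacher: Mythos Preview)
Your proof is correct and follows essentially the same route as the paper's: apply quantifier elimination, observe that substituting constants for the $x_i$ and $t$ for $y$ kills all the higher derivatives so that the atomic conditions become ordinary polynomial (in)equalities, and then use algebraic independence of $\alpha_1,\ldots,\alpha_n,t$ to conclude that any $c$ avoiding finitely many roots reproduces the same truth pattern as $\alpha_1$. The paper sketches this in a couple of lines (and likewise mentions the alternative via strong minimality of the constants), whereas you have written out the bookkeeping more explicitly, but the argument is the same.
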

\noindent
{\em Commentary.} This simply follows from quantifier elimination in $DCF_{0}$: We may assume $\phi(x_{1},x_{2},..x_{n},y)$ is a quantifier-free formula in our language of differential rings. As the $\alpha_{i}$ are constants, $\partial(t) = 1$, and $\alpha_{1}, \alpha_{2},..\alpha_{n},t$ are algebraically independent and transcendental in the  underlying field of $\mathcal{U}$, all $\phi$ can say is that the $\partial(x_{i}) = 0$, $\partial(t) = 1$, and
$P_{j}(x_{1},..,x_{n},y) \neq 0$  for finitely many polynomials $P_{j}$ over $\mathbb{Z}$. Hence we obtain the result. 
\newline
Of course this also ``follows" from strong minimality of the constants together with the ``independence" hypotheses, but our point here is just that very elementary facts are behind our proofs.

\section{Generic Painlev\'e equations $P_{II}-P_{V}$}
In this section we  prove that the solution set of each of the generic Painlev\'e equations $P_{II}-P_{V}$ is strictly disintegrated over 
$\mathbb{C}(t)$: if $y_{1},...,y_{n}$ are distinct solutions, then $y_{1},y_{1}',...,y_{n},y_{n}'$ are algebraically independent over $\mathbb{C}(t)$. 

For each of the families we will first describe the results on the classification of algebraic solutions and then make use of those results to prove strict disintegratedness. There is essentially just one common argument; using the information that in each of the families, there is a Zariski-dense subset of the parameter space for which the corresponding equation has a unique algebraic (over $\mathbb{C}(t)$) algebraic solution, together with Fact 2.11.  We will go through the details in the case of $P_{II}$, giving sketches in the remaining cases.
\subsection{The family $P_{II}$} For $\alpha\in\mathbb{C}$, $P_{II}(\alpha)$ is given by the following equation
\begin{equation*}
\partial^2 y=2y^3+ty+\alpha.
\end{equation*}
or by the equivalent Hamiltonian system:
\[S_{II}({\alpha})\left\{
\begin{array}{rll}
\partial y&=&x-y^2-\frac{t}{2}\\
\partial x&=&2xy+\alpha +\frac{1}{2}.\\
\end{array}\right.\]
It is not difficult to see that $(y,x)=(0,t/2)$ is a rational solution of $S_{II}(0)$. The work of Murata in \cite{Murata} shows that this is the only algebraic solution. However we also have ``Backlund transformations" that send solutions of  $S_{II}(\alpha)$ to that of $S_{II}(-1-\alpha)$, $S_{II}(\alpha-1)$ and $S_{II}(\alpha+1)$. We have from \cite{Murata} and \cite{Umemura2}:
\begin{fct}\label{prop1}
For $\alpha\not\in\frac{1}{2}+\mathbb{Z}$, $P_{II}(\alpha)$ has an algebraic over $\mathbb{C}(t)$ solution iff $\alpha\in\mathbb{Z}$. Furthermore, this solution is unique.
\end{fct}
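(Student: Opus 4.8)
The plan is to prove the two implications separately, the engine in both cases being the rational seed solution of $S_{II}(0)$ together with the B\"acklund transformations, which connect different parameter values. Throughout, note that $y$ is algebraic over $\mathbb{C}(t)$ if and only if the pair $(y,x)$ with $x=\partial y+y^{2}+t/2$ is, so it is harmless to work with the Hamiltonian system $S_{II}$ in place of the scalar equation. For the existence direction when $\alpha\in\mathbb{Z}$, I would start from the rational solution $(y,x)=(0,t/2)$ of $S_{II}(0)$ and apply the B\"acklund transformation $T_{+}\colon S_{II}(\alpha)\to S_{II}(\alpha+1)$ (given by rational functions over $\mathbb{Q}$, e.g. $\bar y=-y-(\alpha+\tfrac12)/x$) and its inverse $T_{-}$ repeatedly. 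Such a transformation sends an algebraic-over-$\mathbb{C}(t)$ solution to an algebraic-over-$\mathbb{C}(t)$ solution, and it can fail to be defined on a solution only when its denominator $x$ vanishes identically, which happens only at parameters in $\tfrac12+\mathbb{Z}$; since the whole $\mathbb{Z}$-orbit avoids $\tfrac12+\mathbb{Z}$, the iteration goes through and produces a (rational) algebraic solution of $S_{II}(n)$ for every $n\in\mathbb{Z}$ --- these are the classical Yablonskii--Vorob'ev solutions, e.g.\ $y=-1/t$ for $\alpha=1$.

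For the converse and the uniqueness clause, and with $\alpha\notin\tfrac12+\mathbb{Z}$ fixed, I would argue that each of $T_{+}$, $T_{-}$ and the sign-type transformation $S_{II}(\alpha)\to S_{II}(-1-\alpha)$ restricts to a \emph{bijection} between the algebraic-over-$\mathbb{C}(t)$ solutions of the source and those of the target: these are birational maps with birational inverses, hence injective wherever defined, and by the previous point they are everywhere defined on algebraic solutions exactly because the parameter and all its shifts stay outside $\tfrac12+\mathbb{Z}$. Hence the number of algebraic-over-$\mathbb{C}(t)$ solutions of $S_{II}(\alpha)$ is an invariant of the orbit of $\alpha$ under the group generated by $\alpha\mapsto\alpha\pm1$ and $\alpha\mapsto-1-\alpha$; the orbit of $0$ is precisely $\mathbb{Z}$. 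Murata's result \cite{Murata} that $(0,t/2)$ is the \emph{only} algebraic solution of $S_{II}(0)$ then gives that every integer value of $\alpha$ has exactly one algebraic solution (the uniqueness clause), while the Murata--Umemura classification of algebraic solutions of $P_{II}$ \cite{Murata,Umemura2} is what rules out every $\alpha\notin\tfrac12+\mathbb{Z}$ other than the integers.

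The genuinely hard input is imported: Murata's proof that $S_{II}(0)$ has no algebraic solution besides $(0,t/2)$, and the classification excluding algebraic solutions at the remaining non-integer parameters. What the argument above actually requires me to check is the bookkeeping around the B\"acklund transformations: that off the locus $\tfrac12+\mathbb{Z}$ they are honest bijections of the algebraic-solution sets --- in particular that an algebraic solution can never force $x$ to vanish identically, which is precisely the phenomenon that distinguishes the half-integer case --- and that the group they generate has the claimed orbits. This is elementary birational manipulation with explicit formulas, but it is the step where the hypothesis $\alpha\notin\tfrac12+\mathbb{Z}$ does its work, and it is the only place where I would expect to have to be careful.
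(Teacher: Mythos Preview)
The paper does not prove this statement: it is recorded as a \emph{Fact}, prefaced by ``We have from \cite{Murata} and \cite{Umemura2}'', and then used as a black box in the proof of the subsequent proposition. So there is no argument in the paper to compare your proposal against.

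That said, your sketch is a faithful outline of what lies behind those cited references. The existence and uniqueness parts for $\alpha\in\mathbb{Z}$ via B\"acklund transport from the seed $(0,t/2)$ at $\alpha=0$ are correct, and your remark that $x\equiv 0$ on a solution forces $\alpha=-\tfrac12$ (immediately from $\partial x=2xy+\alpha+\tfrac12$) is exactly why the bijectivity bookkeeping goes through off $\tfrac12+\mathbb{Z}$. For the nonexistence direction when $\alpha\notin\mathbb{Z}\cup(\tfrac12+\mathbb{Z})$ you are, like the paper, simply invoking the Murata--Umemura classification rather than supplying an argument; that is the genuinely hard analytic step, and you are right to flag it as imported rather than pretend otherwise.
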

We can now prove our main result:
\begin{prop}\label{P1} Let $\alpha\in\mathbb{C}$ be generic (i.e. transcendental). Then 
the solution set $X(\alpha)$ of $P_{II}(\alpha)$ is strictly disintegrated over $K = \mathbb{C}(t)$.
\end{prop}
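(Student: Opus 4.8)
The plan is to reduce strict disintegratedness to the two-solution case (as the paper already notes, via Remark 2.4 and geometric triviality from Fact 2.9 it suffices to show any two distinct solutions $y_1,y_2\in X(\alpha)$ are mutually generic over $K_0=\mathbb{Q}(\alpha)(t)$), and then to derive a contradiction from a hypothetical algebraic relation by specializing the transcendental parameter $\alpha$ to suitable integers. So suppose toward a contradiction that $y_1,y_2$ are distinct solutions of $P_{II}(\alpha)$ that are \emph{not} mutually generic. By strong minimality and the trichotomy-type consequences already recorded (each $y_i$ is generic over $K_0$ since there are no algebraic solutions for transcendental $\alpha$), non-mutual-genericity forces $y_2\in acl(K_0,y_1)=K_0(y_1,y_1')^{alg}$, i.e. $y_2\in acl(K,y_1)$.

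Next I would make this algebraicity into a first-order statement and transport it across the parameter. Using expressibility of $acl$ (the ``trivial model-theoretic fact'' in the excerpt), there is an $L$-formula $\psi(x_1,x_2,z,y)$ and a witness so that $\mathcal{U}\models\exists x_1\exists x_2\,[x_1\neq x_2\ \wedge\ x_1,x_2\in X(z)\ \wedge\ x_2\in acl(z,x_1)$ witnessed by $\psi]$ holds at $z=\alpha$, $y=t$; bundling in that $x_1,x_2$ both solve $S_{II}(z)$ and that $\psi$ pins $x_2$ down to finitely many choices over $(z,x_1,t)$, we get a single formula $\phi(z,t)$ in the language of differential fields with $\mathcal{U}\models\phi(\alpha,t)$. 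Since $\alpha$ is a transcendental constant, Fact 2.11 applies: $\mathcal{U}\models\phi(c,t)$ for all but finitely many $c\in\mathbb{C}$, in particular for infinitely many positive integers $c=m\in\mathbb{Z}$.

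Now I would cash this in against Fact~\ref{prop1}. For such an integer $m$, $\phi(m,t)$ asserts there exist distinct solutions $a_1,a_2$ of $P_{II}(m)$ with $a_2$ algebraic over $K_0(m)(a_1,a_1')=\mathbb{Q}(m)(t)(a_1,a_1')=\mathbb{C}(t)(a_1,a_1')$ (and more precisely lying in its algebraic closure). By Fact~\ref{prop1}, for $m\in\mathbb{Z}$ ($m\notin\frac12+\mathbb{Z}$ automatically) $P_{II}(m)$ has a \emph{unique} algebraic-over-$\mathbb{C}(t)$ solution $r_m$ — and only this one solution is algebraic over $\mathbb{C}(t)$ itself. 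The contradiction comes from the following: if $a_1$ is generic over $\mathbb{C}(t)$ then, $P_{II}(m)$ being geometrically trivial (Fact 2.9 again applies to the \emph{integer} parameter equations too, as long as we only need triviality, not absence of algebraic solutions) and strongly minimal, $a_2\in acl(\mathbb{C}(t),a_1)$ with $a_1\neq a_2$ would force $a_2\in acl(\mathbb{C}(t))=\mathbb{C}(t)^{alg}$, so $a_2=r_m$ is the unique algebraic solution; but then reversing the two, $a_1$ would likewise have to be algebraic, hence $a_1=r_m=a_2$, contradicting $a_1\neq a_2$. If instead $a_1$ is itself algebraic over $\mathbb{C}(t)$, then again $a_1=r_m$ and, since $a_2\in acl(\mathbb{C}(t),a_1)=\mathbb{C}(t)^{alg}$, also $a_2=r_m=a_1$, the same contradiction. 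Either way $\phi(m,t)$ fails, contradicting that it holds for infinitely many integers $m$. Hence no such relation exists, $y_1,y_2$ are mutually generic, and $X(\alpha)$ is strictly disintegrated over $K=\mathbb{C}(t)$.

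The delicate point — and the step I'd expect to require the most care — is packaging the $acl$ relation and the ``exactly one algebraic solution'' uniqueness uniformly into one first-order formula $\phi(z,y)$ to which Fact~2.11 literally applies, and checking that specializing $\alpha\mapsto m$ genuinely produces distinct solutions of $P_{II}(m)$ with the claimed algebraicity (so that the uniqueness in Fact~\ref{prop1} bites). One must also be slightly careful that geometric triviality and strong minimality are available at the integer parameters $m$ — these are not generic, but strong minimality of $P_{II}(m)$ for $m\in\mathbb{Z}$ and geometric triviality follow from the same irreducibility/trichotomy inputs cited for Fact~2.9 (indeed for $P_{II}$ every parameter outside $\frac12+\mathbb{Z}$ is known to give a strongly minimal, geometrically trivial equation), so this is safe; I'd state it explicitly. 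The rest is the elementary model theory already laid out in the preliminaries.
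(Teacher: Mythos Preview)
Your argument has a genuine gap at the step where you specialize to an integer parameter $m$ and then try to derive a contradiction. The inference you make there is false: you claim that, because $P_{II}(m)$ is strongly minimal and geometrically trivial, having $a_1$ generic over $\mathbb{C}(t)$ and $a_2\in acl(\mathbb{C}(t),a_1)$ with $a_1\neq a_2$ forces $a_2\in acl(\mathbb{C}(t))$. That is precisely the content of \emph{strict disintegratedness}, not of geometric triviality. Geometric triviality only says that pairwise mutual genericity of three or more solutions implies full mutual genericity; it says nothing about a single pair, and in particular a geometrically trivial strongly minimal set may well support nontrivial finite-to-finite correspondences making two distinct generic elements interalgebraic. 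So from your existential formula $\phi(m,t)$ you only obtain \emph{some} pair $a_1\neq a_2$ with $a_2\in acl(\mathbb{C}(t),a_1)$, and if $a_1$ happens to be generic you have no way to force $a_2$ to be the algebraic solution $r_m$. (Your second case, $a_1$ algebraic, is fine; but you cannot guarantee you land in it.) You are, in effect, assuming at the integer parameter exactly the property you are trying to prove at the transcendental one.

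The paper closes this gap by a different packaging of the first-order statement. Because for transcendental $\alpha$ \emph{every} solution of $X(\alpha)$ is generic over $K_0$ (Fact 2.8), all solutions realize the same type over $K_0$, and hence the existence of one bad pair upgrades to a \emph{universal} sentence $\sigma(\alpha,t)$: ``for every $u\in X(\alpha)$ there exists $v\neq u$ in $X(\alpha)$ with $\phi(\alpha,t,u,v)$''. After specializing via Fact 2.11 to some $n\in\mathbb{Z}$, one may then \emph{choose} $u$ to be the unique algebraic solution of $P_{II}(n)$ and immediately obtain a second algebraic solution $v\neq u$, contradicting Fact~\ref{prop1}. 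No appeal to strong minimality or geometric triviality at the integer parameter is needed. The missing idea in your attempt is exactly this passage from the existential witness to the universally quantified $\sigma$, using that the type of a solution over $K_0$ is unique.
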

\begin{proof}
By Fact 2.8 and Remark 2.4 it suffices to prove that any two elements of $X(\alpha)$ are mutually generic over $K_{0} = \mathbb{Q}(\alpha,t)$. 
Let $y\in X(\alpha)$ (so generic over $K_{0}$ by 2.8).  We want to show that  $acl_{X(\alpha)}(K_{0},y)=\{y\}$. For a contradiction suppose there is $z\in acl_{X(\alpha)}(K_{0},y)$, with $z\neq y$. Let the formula $\phi(\alpha,t,u,v)$ witness this, i.e. $\mathcal{U}\models \phi(\alpha,t,y,z)$ and for any $\alpha_{1},y_{1},z_{1}$ such that $\mathcal{U}\models \phi(\alpha_{1},t,y_{1},z_{1})$ we have that $z_{1}\in acl(\mathbb{Q}(\alpha_{1},t,y_{1}))$.  Now as (by 2.8) all elements of $X(\alpha)$ are generic over $K_{0}$, so by quantifier elimination they all satisfy the same formulas as $y$ over $K_{0}$.
\newline
Hence:  $\mathcal{U} \models \sigma(\alpha,t)$  where $\sigma(\alpha,t)$ is \[\forall u\left(u\in X(\alpha))\rightarrow\exists v(u\neq v\wedge v\in X(\alpha)\wedge\phi(\alpha,t,u,v))\right) \]

By Fact 2.11, $\mathcal{U} \models \sigma(\alpha_{1},t)$ for all but finitely many $\alpha_{1}\in\mathbb{C}$. So for some $n\in\mathbb{Z}$, $\sigma(n,t)$ is true in $\mathcal{U}$; that is
\[\forall u\left(u\in X(n))\rightarrow\exists v(u\neq v\wedge v\in X(n)\wedge\phi(n,t,u,v))\right).\] However, choosing $u$ to be the unique algebraic (over $\mathbb{C}(t)$) element of $X(n)$ (from \ref{prop1}), we obtain another distinct algebraic (over $\mathbb{C}(t)$) element of $X(n)$, a contradiction.  
\end{proof} 

\subsection{The family $P_{III}$} The third Painlev\'e equation is given by
\[\partial^2y=\frac{1}{y}(\partial y)^2-\frac{1}{t}\partial y+\frac{1}{t}(\alpha y^2+\beta)+\gamma y^3+\frac{\delta}{y},\] where $\alpha,\beta,\gamma,\delta\in\mathbb{C}$. However as we pointed out in \cite{NagPil}, it is enough to consider $P_{III'}(\alpha,\beta,4,-4)$ and its Hamiltonian equivalent
\[S_{III'}({v_1,v_2})\left\{
\begin{array}{rll}
\partial y&=&\frac{1}{t}(2y^2x-y^2+v_1y+t)\\
\partial x&=&\frac{1}{t}(-2yx^2+2xy-v_1x+\frac{1}{2}(v_1+v_2))
\end{array}\right.\] where $\alpha=4v_2$ and $\beta=-4(v_1-1)$. From the work of Murata \cite{Murata2} we have the following
\begin{fct}\label{prop2}
1. $S_{III'}({v_1,v_2})$ has algebraic solutions if and only if there exist an integer $n$ such that $-v_2+v_1-1=n$ or $-v_2-v_1+1=n$.\\
2. If $S_{III'}({v_1,v_2})$ has algebraic solutions, then the number of algebraic solutions is one or two. $S_{III'}({v_1,v_2})$ has two algebraic solutions if and only if there exist two integers $n$ and $m$ such that $-v_2+v_1-1=n$ and $-v_2-v_1+1=m$.
\end{fct}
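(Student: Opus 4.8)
\emph{On the proof.} This is Murata's theorem \cite{Murata2}; since it is the one external input to the $P_{III}$ case, the strategy of the argument is worth recording, in parallel with the discussion of $P_{II}$ above. The convenient object is the Hamiltonian system $S_{III'}(v_1,v_2)$, whose Backlund transformations generate an (extended) affine Weyl group acting on the parameter vector $(v_1,v_2)$ by integer translations of $v_1$ and $v_2$ together with the sign changes $v_i\mapsto -v_i$; hence the essential invariants are the residues $v_1-v_2$ and $v_1+v_2$ modulo $\mathbb{Z}$ (up to interchanging them). Rewriting the conditions in the statement, $-v_2+v_1-1\in\mathbb{Z}$ says $v_1-v_2\in\mathbb{Z}$ and $-v_2-v_1+1\in\mathbb{Z}$ says $v_1+v_2\in\mathbb{Z}$, so the assertion is that algebraic solutions exist precisely on the two families of ``walls'' $\{v_1-v_2\in\mathbb{Z}\}$ and $\{v_1+v_2\in\mathbb{Z}\}$, and that there are two of them exactly on the intersections.

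For the ``if'' direction, and for the lower bound of two at an intersection, the plan is to produce explicit seed solutions and propagate them. One checks directly --- in analogy with the solution $(y,x)=(0,t/2)$ of $S_{II}(0)$ --- that at a suitable resonant base point on each wall the system $S_{III'}$ has a rational solution $(y,x)$; applying the Backlund transformations, whose effect on the parameters is as recorded above, transports such a seed to an algebraic solution at \emph{every} parameter lying on the same wall. At a point where both linear conditions hold, the two constructions apply simultaneously and produce two algebraic solutions, which one checks are genuinely distinct (e.g.\ because the two seeds lie in different Backlund orbits).

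The substantive half is the converse together with the exact count: off both walls there is no algebraic solution, on a single wall there is exactly one, and on an intersection exactly two. This is where Murata's case analysis enters and where the Weyl-group bookkeeping alone does not suffice. The strategy is to reduce, via the Backlund group, to a parameter in a fundamental domain, and then to show that a hypothetical algebraic solution of $S_{III'}(v_1,v_2)$ is either a Riccati-type (one-parameter, reducible) solution --- whose existence forces one of the stated linear relations on $(v_1,v_2)$ --- or else contradicts the irreducibility (Umemura $J$-property) of the equation at that parameter; a bounded analysis of the admissible pole orders of a rational solution of the Hamiltonian system then pins the parameters down, shows that on each wall the rational solution is determined by its parameters, and excludes a third solution at a double resonance. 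The main obstacle is exactly this last step: ruling out algebraic solutions away from the two walls, and bounding their number on the walls, is not formal and is the content of \cite{Murata2}.
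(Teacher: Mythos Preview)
The paper does not prove this statement at all: it is stated as a \emph{Fact} and attributed without argument to Murata \cite{Murata2}, exactly as you do in your opening line. So there is no ``paper's own proof'' to compare against; your proposal goes beyond the paper by sketching the architecture of Murata's argument (Backlund/affine Weyl group orbits, seed rational solutions on the resonant walls, reduction to a fundamental domain, Riccati/irreducibility dichotomy, and pole-order bookkeeping for the exact count).

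As an expository outline of how such classifications are obtained this is reasonable and correctly flags that the nontrivial content---nonexistence off the walls and the precise multiplicity on them---is not formal and must be imported from \cite{Murata2}. Since the paper treats the result as a black box, your sketch is already more than is required; the only caution is that the details you allude to (which seed solutions, which presentation of the Backlund group for $S_{III'}$, and how the Riccati/irreducibility alternative is organized) are specific to Murata's paper and should not be asserted more precisely than you can verify there.
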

From this we easily get:
\begin{prop}\label{P2}
The solution set $X(\alpha,\beta,\gamma,\delta)$ of $P_{III}(\alpha,\beta,\gamma,\delta)$, where $\alpha,\beta,\gamma,\delta\in\mathbb{C}$ are algebraically independent (and transcendental), is strictly disintegrated over $\mathbb{C}(t)$.
\end{prop}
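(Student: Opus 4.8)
# Proof Proposal for Proposition 3.4 (Strict disintegratedness for $P_{III}$)

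The plan is to mimic exactly the argument used for $P_{II}$ in Proposition \ref{P1}, using Fact \ref{prop2} in place of Fact \ref{prop1}. By Fact 2.8 and Remark 2.4, it suffices to show that any two solutions $y_1, y_2 \in X(\alpha,\beta,\gamma,\delta)$ are mutually generic over $K_0 = \mathbb{Q}(\alpha,\beta,\gamma,\delta)(t)$; and since (as recalled in \cite{NagPil}) $P_{III}$ reduces to the two-parameter family $S_{III'}(v_1,v_2)$ with $\alpha = 4v_2$, $\beta = -4(v_1-1)$, it is enough to work with generic (algebraically independent transcendental) parameters $v_1, v_2$ and the Hamiltonian system $S_{III'}(v_1,v_2)$, whose solution set we continue to call $X$. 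First I would fix a generic solution $y \in X$ (generic over $K_0$ by Fact 2.8) and, for contradiction, suppose some $z \in acl_X(K_0, y)$ with $z \neq y$, witnessed by a formula $\phi(v_1,v_2,t,u,v)$ in the sense of the expressibility-of-$acl$ fact preceding Fact 2.11.

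Next, exactly as in the $P_{II}$ case: since by Fact 2.8 all elements of $X$ are generic over $K_0$, quantifier elimination gives that they all satisfy the same formulas over $K_0$ as $y$ does, so $\mathcal{U} \models \sigma(v_1,v_2,t)$ where $\sigma$ is the sentence
\[
\forall u\bigl(u \in X \rightarrow \exists v(u \neq v \wedge v \in X \wedge \phi(v_1,v_2,t,u,v))\bigr).
\]
Now I apply Fact 2.11, but twice (first specialising $v_1$, then $v_2$), or equivalently in the two-variable form: since $v_1, v_2$ are algebraically independent transcendentals, for all but finitely many $c_1 \in \mathbb{C}$ the sentence $\sigma(c_1, v_2, t)$ holds, and then for each such $c_1$, for all but finitely many $c_2 \in \mathbb{C}$ the sentence $\sigma(c_1, c_2, t)$ holds. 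Hence I can choose the specialised parameters $(v_1, v_2) = (c_1, c_2)$ to be a pair of \emph{integers} $(m,k)$ — more precisely, with $c_1, c_2$ chosen so that $-c_2 + c_1 - 1 \in \mathbb{Z}$ (which only pins down $c_2$ once $c_1 \in \mathbb{Z}$ is chosen, say) — such that $\sigma(c_1,c_2,t)$ is true and $S_{III'}(c_1,c_2)$ has \emph{exactly one} algebraic (over $\mathbb{C}(t)$) solution. By part (2) of Fact \ref{prop2}, this last requirement is met as long as exactly one of $-c_2 + c_1 - 1$, $-c_2 - c_1 + 1$ is an integer; picking $c_1$ to be a small generic rational and $c_2 := c_1 - 1 - k$ for an integer $k$ makes $-c_2 + c_1 - 1 = k \in \mathbb{Z}$ while $-c_2 - c_1 + 1 = -2c_1 + 2 + k \notin \mathbb{Z}$, so there is precisely one algebraic solution. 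Feeding that unique algebraic solution in as $u$ in $\sigma(c_1,c_2,t)$ produces a second, distinct algebraic (over $\mathbb{C}(t)$) solution of $S_{III'}(c_1,c_2)$, contradicting uniqueness. This contradiction shows $acl_X(K_0,y) = \{y\}$, hence any two distinct solutions are mutually generic, and strict disintegratedness follows.

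The one point requiring a little care — and the step I expect to be the only real obstacle — is the bookkeeping in the specialisation: I must make sure the finitely many "bad" values excluded by the successive applications of Fact 2.11 do not swallow up the \emph{whole} (Zariski-dense, but here really just a codimension-one arithmetic progression) set of parameters for which $S_{III'}$ has a unique algebraic solution. Since Fact 2.11 excludes only finitely many complex values at each stage, and the locus "$-v_2+v_1-1 \in \mathbb{Z}$ and $-v_2-v_1+1 \notin \mathbb{Z}$" is infinite (indeed, for each choice of generic $c_1 \notin \tfrac12\mathbb{Z}$ there is a full integer's worth of valid $c_2$), there is always an admissible specialisation left, so the argument goes through. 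Everything else is word-for-word the $P_{II}$ proof, which is why the paper is content to say "we easily get".
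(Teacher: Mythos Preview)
Your argument is correct and follows the same template as the paper's proof, but you do more work than necessary. The paper specialises only \emph{one} parameter: it applies Fact 2.11 to replace $v_2$ by some $c$ with $-c + v_1 - 1 \in \mathbb{Z}$, while leaving $v_1$ untouched and transcendental. Transcendence of $v_1$ then \emph{automatically} forces $-c - v_1 + 1 \notin \mathbb{Z}$, so Fact \ref{prop2}(2) gives a unique algebraic solution with no further bookkeeping. Your double specialisation (first $v_1$, then $v_2$) is valid but obliges you to track the extra condition $c_1 \notin \tfrac12\mathbb{Z}$ by hand; it also introduces a small inconsistency in your write-up, where you first propose taking $(c_1,c_2)$ to be a pair of integers (which would in fact give \emph{two} algebraic solutions and break the argument) before correcting to ``$c_1$ a small generic rational''. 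Drop the integer remark, or better, adopt the paper's one-variable specialisation and the proof becomes cleaner.
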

\begin{proof}
We only need to work with $X({v_1,v_2})$ the solution set of $S_{III'}({v_1,v_2})$, $v_1,v_2$ in $\mathbb{C}$ algebraically independent. 
By Remark 2.4 again, it is enough to prove the result over $K_{0} = \mathbb{Q}(t,v_{1},v_{2})$. 
Let $y\in X({v_1,v_2})$. We want to show that  $acl_{X({v_1,v_2})}(K_{0},y)=\{y\}$. Suppose for a contradiction there is $z\in acl_{X({v_1,v_2})}(K,y)$, with $z\neq y$. As before this is witnessed by a formula $\phi(v_{1},v_{2},t,u,v)$, and again as all solutions of 
$X(v_{1},v_{2})$ are generic over $K_{0}$, the following sentence  $\sigma(v_{1},v_{2},t)$ is true in $\mathcal{U}$: 
\[\forall u\left(u\in X(v_1,v_2))\rightarrow\exists v(u\neq v\wedge v\in X(v_1,v_2)\wedge\phi(v_{1},v_{2},t, u,v))\right) \]
\\
By Fact 2.11 $\mathcal{U} \models \sigma(v_1,c,t)$ is true for all but finitely many $c\in\mathbb{C}$.
\newline
So we can find such $c$ with $-c + v_{1} - 1 \in\mathbb{Z}$. By Fact 3.3 and the fact that $v_{1}$ is transcendental, $X(v_{1},c)$ has a unique algebraic (over $\mathbb{C}(t)$) solution. As in the $P_{II}$ case we get a contradiction.
\end{proof}

\subsection{The family $P_{IV}$} 
For $\alpha,\beta\in\mathbb{C}$, the fourth Painlv\'e equation is
\[\partial^2y=\frac{1}{2y}(\partial y)^2+\frac{3}{2}y^3+4ty^2+2(t^2-\alpha)y+\frac{\beta}{y}.\] 
From the work of Murata \cite{Murata} (see also \cite{Gromak1}) we have the following:
\begin{fct}\label{prop3}
$P_{IV}$ has algebraic solutions if and only if $\alpha,\beta$ satisfy one of the following conditions:
\begin{enumerate}
\item $\alpha=n_1$ and $\beta=-2(1+2n_2-n_1)^2$, where $n_1,n_2\in\mathbb{Z}$;
\item $\alpha=n_1$, $\beta=-\frac{2}{9}(6n_2-3n_1+1)^2$, where $n_1,n_2\in\mathbb{Z}$.
\end{enumerate} Furthermore the algebraic solutions for these parameters are unique.
\end{fct}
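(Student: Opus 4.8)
The statement is Murata's classification of the algebraic solutions of $P_{IV}$, and I sketch the route by which such a classification is obtained. The argument rests on three ingredients: a reduction from algebraic to rational solutions, the action of the B\"acklund (affine Weyl) group on the parameter space, and the identification of explicit seed solutions.

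\emph{Reduction to rational solutions.} First I would argue that any algebraic solution of $P_{IV}$ is in fact rational in $t$. Since $P_{IV}$ has the Painlev\'e property and its only fixed singularity is at $t=\infty$, an algebraic solution is single-valued away from $\infty$ (all movable singularities being poles), and the admissible local expansions at $\infty$ are Laurent rather than genuinely branched; hence the solution is a rational function of $t$. This is the step that makes the $P_{IV}$ (and $P_{II}$) classification cleaner than the $P_{VI}$ one, where genuinely algebraic non-rational solutions occur.

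\emph{The ``if'' direction via seeds and symmetry.} Passing to the symmetric form of $P_{IV}$, the B\"acklund transformations generate the extended affine Weyl group of type $A_2^{(1)}$, which acts on the parameter pair $(\alpha,\beta)$ by affine integer shifts and sends rational solutions to rational solutions. I would then exhibit two seed solutions sitting at the origins of the two families: $y=-2t$ solves $P_{IV}$ at $(\alpha,\beta)=(0,-2)$, and $y=-2t/3$ solves it at $(\alpha,\beta)=(0,-2/9)$. Propagating each seed through the Weyl group produces exactly the two integer lattices $\beta=-2(1+2n_2-n_1)^2$ and $\beta=-\frac{2}{9}(6n_2-3n_1+1)^2$ with $\alpha=n_1$, which gives the ``if'' direction by construction.

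\emph{The ``only if'' direction and uniqueness.} The substantive content is completeness: that these two orbits exhaust all rational solutions, and that each admissible parameter carries exactly one. I would show that every rational solution can be moved by a B\"acklund transformation into a fundamental domain for the Weyl action, where a direct analysis --- bounding the degree of the rational function and matching its pole and residue data against the constraints imposed by the equation --- leaves only the two seeds as possibilities; every rational solution therefore lies in one of the two orbits. Uniqueness follows because each B\"acklund transformation is a birational map acting injectively on solutions, so a unique seed propagates to a unique solution at every reachable parameter, and one must additionally check that the two lattices never produce two distinct solutions at a shared parameter. \emph{The main obstacle} is precisely this rational-solution bound: controlling the degree and pole structure tightly enough to rule out spurious solutions in the fundamental domain is where the real work lies, and it is exactly this analysis that Murata's paper supplies.
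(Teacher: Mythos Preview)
The paper does not prove this statement at all: it is recorded as a \emph{Fact} and attributed to Murata \cite{Murata} (with a secondary reference to \cite{Gromak1}), and is then used as a black-box input to the model-theoretic argument for strict disintegratedness of generic $P_{IV}$. So there is no ``paper's own proof'' to compare your proposal against.

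That said, your outline is a fair high-level summary of how Murata's classification actually proceeds: the reduction from algebraic to rational via the Painlev\'e property, the two seed solutions $y=-2t$ at $(\alpha,\beta)=(0,-2)$ and $y=-2t/3$ at $(\alpha,\beta)=(0,-2/9)$, the propagation by the affine Weyl group of type $A_2^{(1)}$, and the completeness/uniqueness argument by reduction to a fundamental domain. You correctly flag that the hard step is bounding the degree and pole data of a putative rational solution in the fundamental domain, and you correctly defer that to Murata. One small caveat: your uniqueness sketch (``B\"acklund transformations act injectively'') glosses over the usual difficulty that these birational maps may be undefined at isolated solutions, so establishing uniqueness at every lattice point requires some care beyond what you indicate; this is handled in the cited references rather than in the present paper.
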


\begin{prop}\label{P3}
The solution set $X(\alpha,\beta)$ of $P_{IV}(\alpha,\beta)$, $\alpha,\beta\in\mathbb{C}$ algebraically independent, is strictly disintegrated
over $\mathbb{C}(t)$.
\end{prop}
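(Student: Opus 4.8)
The plan is to follow verbatim the argument used for $P_{II}$ and $P_{III}$, since $P_{IV}$ has exactly the same structure: by Remark 2.4 and Fact 2.8 it suffices to show that any two distinct solutions $y,z \in X(\alpha,\beta)$ are mutually generic over $K_0 = \mathbb{Q}(\alpha,\beta)(t)$, equivalently that $\mathrm{acl}_{X(\alpha,\beta)}(K_0,y) = \{y\}$ for each $y \in X(\alpha,\beta)$. First I would suppose for contradiction that some $z \neq y$ lies in $\mathrm{acl}_{X(\alpha,\beta)}(K_0,y)$, witnessed by an $L$-formula $\phi(\alpha,\beta,t,u,v)$. Since by Fact 2.8 the equation is strongly minimal with no algebraic solutions, every solution of $X(\alpha,\beta)$ is generic over $K_0$ and hence realizes the same type over $K_0$; by quantifier elimination the sentence
\[
\sigma(\alpha,\beta,t):\quad \forall u\bigl(u \in X(\alpha,\beta) \rightarrow \exists v\,(u \neq v \wedge v \in X(\alpha,\beta) \wedge \phi(\alpha,\beta,t,u,v))\bigr)
\]
holds in $\mathcal{U}$.

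Now I would apply Fact 2.11, treating $\beta$ as the distinguished variable: since $\alpha,\beta$ are algebraically independent transcendentals and $\mathcal{U} \models \sigma(\alpha,\beta,t)$, we get $\mathcal{U} \models \sigma(\alpha,c,t)$ for all but finitely many $c \in \mathbb{C}$. The key point, exactly as in the $P_{III}$ case, is that I only need \emph{one} good value of the remaining parameter: I want to choose $c \in \mathbb{C}$, outside the finite exceptional set, such that $P_{IV}(\alpha,c)$ has a \emph{unique} algebraic solution over $\mathbb{C}(t)$. By Fact 3.5, $P_{IV}$ has algebraic solutions precisely when (for instance) $\alpha = n_1 \in \mathbb{Z}$ and $\beta = -2(1+2n_2-n_1)^2$ for integers $n_1,n_2$ — but here $\alpha$ is transcendental, so that route is unavailable; instead I hold $\alpha$ fixed and vary the value substituted for $\beta$... which is the wrong way round. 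So the correct move is to apply Fact 2.11 with $\alpha$ as the distinguished variable, obtaining $\mathcal{U} \models \sigma(c,\beta,t)$ for all but finitely many $c \in \mathbb{C}$; then pick $c = n_1 \in \mathbb{Z}$ (avoiding finitely many exceptions) and, since $\beta$ remains transcendental, observe that condition (1) of Fact 3.5 with $n_2$ chosen so that $-2(1+2n_2-n_1)^2 = \beta$ cannot hold (as $\beta$ is transcendental), and likewise condition (2) fails; so $P_{IV}(n_1,\beta)$ has \emph{no} algebraic solution, which is too strong and does not give the contradiction.

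The genuinely correct approach, and the one I would actually carry out, is: first apply Fact 2.11 to replace $\alpha$ by a suitable integer $n_1$, getting $\sigma(n_1,\beta,t)$ true, keeping $\beta$ transcendental; then note $\sigma(n_1,\beta,t)$ is a formula in the parameter $\beta$ with $n_1,t$ now fixed algebraic data, and apply Fact 2.11 \emph{again} (or a trivial specialization, $\beta$ being transcendental over $\mathbb{Q}(n_1,t) = \mathbb{Q}(t)$) to replace $\beta$ by a value of the form $-2(1+2n_2-n_1)^2$ for a suitable integer $n_2$, avoiding the finitely many bad values. For that final specialized pair $(n_1, -2(1+2n_2-n_1)^2)$, Fact 3.5(1) guarantees a \emph{unique} algebraic solution $u_0$ over $\mathbb{C}(t)$; plugging $u = u_0$ into $\sigma(n_1,-2(1+2n_2-n_1)^2,t)$ produces a second, distinct algebraic solution $v$ (since $v \in \mathrm{acl}(\mathbb{Q}(t,u_0)) \subseteq \mathbb{C}(t)^{\mathrm{alg}}$ and $X$ is of order two, $v$ is algebraic over $\mathbb{C}(t)$), contradicting uniqueness. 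I expect the only real subtlety — and the step to state carefully rather than the step that is hard — is making sure that the finite exceptional sets from the two successive applications of Fact 2.11 do not swallow \emph{all} admissible integer values $n_1$ and all admissible $n_2$: this is immediate because the admissible values form an infinite set (indexed by $\mathbb{Z}$, or $\mathbb{Z}^2$) while the exceptional sets are finite, so a good choice always exists. The rest is a transcription of the $P_{II}$ proof.
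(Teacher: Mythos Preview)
Your final ``genuinely correct approach'' is exactly the paper's proof: specialize $\alpha$ to an integer $n_1$ via Fact~2.11, then specialize $\beta$ to a value of the form $-2(1+2n_2-n_1)^2$ via a second application of Fact~2.11, and contradict the uniqueness clause of Fact~3.5. The preceding paragraphs of exploratory false starts should of course be deleted from a final write-up, but the argument you land on is the intended one and is correct.
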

\begin{proof}
Again it suffices to work over $K_{0} = \mathbb{Q}(t,\alpha,\beta)$. If the conclusion fails we obtain the following sentence $\sigma(t,\alpha,\beta)$
true in $\mathcal{U}$:

\[\forall u\left(u\in X(\alpha,\beta))\rightarrow\exists v(u\neq v\wedge v\in X(\theta,\kappa)\wedge\phi(\alpha,\beta,t,u,v))\right) \]
where $\phi(\alpha,\beta,t,u,v)$ expresses that $v$ is algebraic over $\alpha,\beta,t,u$. 
\\
By Fact 2.11 (and Fact 3.5(1)) we can first choose $n_{1}\in \mathbb{Z}$, then $n_{2}\in \mathbb{Z}$ such that $\sigma(t,n_{1},n_{2})$ is true in $\mathcal{U}$ and $X(n_{1},n_{2})$ has a unique algebraic (over $\mathbb{C}(t)$) point. A contradiction as before.
\end{proof}



\subsection{The family $P_{V}$} The fifth Painlev\'e equation $P_{V}(\alpha,\beta,\gamma,\delta)$ is given by
\[\partial^2y=\left(\frac{1}{2y}+\frac{1}{y-1}\right)\left(\partial y\right)^2-\frac{1}{t}\partial y+\frac{(y-1)^2}{t^2}\left(\alpha y+\frac{\beta}{y}\right)+\gamma\frac{y}{t}+\delta\frac{y(y+1)}{y-1},\] where $\alpha,\beta,\gamma,\delta\in\mathbb{C}$. 

For our purposes it is enough to restrict to the case  when $\delta \neq 0$, in which case all algebraic (over $\mathbb{C}(t)$) solutions are rational
(see \cite{Kitaev} and \cite{Gromak1}). We let $\lambda_0=(-2\delta)^{-1/2}$, fixing $-\pi<arg(\lambda_0)<\pi$, and with the same references we have:

\begin{fct}\label{prop4}
$P_{V}$ with $\delta\neq 0$ has a rational solution if and only if for some branch of $\lambda_0$, one of the following holds with $m,n\in\mathbb{Z}$:
\begin{enumerate}
\item $\alpha=\frac{1}{2}(m+\lambda_0\gamma)^2$ and $\beta=-\frac{1}{2}n^2$ where $n>0$, $m+n$ is odd, and $\alpha\neq 0$ when $|m|<n$;
\item $\alpha=\frac{1}{2}n^2$ and $\beta=-\frac{1}{2}(m+\lambda_0\gamma)^2$ where $n>0$, $m+n$ is odd, and $\beta\neq 0$ when $|m|<n$;
\item $\alpha=\frac{1}{2}a^2$, $\beta=-\frac{1}{2}(a+n)^2$ and $\lambda_0\gamma=m$, where $m+n$ is even and $a$ arbitrary;
\item $\alpha=\frac{1}{8}(2m+1)^2$, $\beta=-\frac{1}{8}(2n+1)^2$ and $\lambda_0\gamma\not\in\mathbb{Z}$.
\end{enumerate}
\end{fct}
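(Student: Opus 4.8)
The plan is to prove the classification by combining the affine Weyl group symmetry of $P_V$ with an explicit analysis of a small number of \emph{seed} rational solutions, and then to close the converse by a resonance argument for the associated linear isomonodromy problem. First I would exploit the scaling symmetry of $P_V$: under $t\mapsto\lambda t$ the parameters transform as $(\alpha,\beta,\gamma,\delta)\mapsto(\alpha,\beta,\lambda\gamma,\lambda^2\delta)$, so choosing $\lambda=\lambda_0=(-2\delta)^{-1/2}$ normalizes $\delta=-\tfrac12$ and replaces $\gamma$ by $\lambda_0\gamma$. This is exactly the maneuver that produces both the factor $\lambda_0\gamma$ and the phrase ``for some branch of $\lambda_0$'' in the statement, the two square-root branches giving the two admissible sign choices. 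It therefore suffices to classify rational solutions of the normalized equation $P_V(\alpha,\beta,\gamma,-\tfrac12)$, recording conditions on $(\alpha,\beta,\gamma)$.

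Next I would pass to the symmetric (Noumi--Yamada) Hamiltonian form of $P_V$, which has type $A_3^{(1)}$: its parameters $(a_0,a_1,a_2,a_3)$, normalized by $a_0+a_1+a_2+a_3=1$, are linear combinations of the local exponents $(\kappa_0,\kappa_\infty,\theta)$ of the rank-two isomonodromy system (a regular singular point at $0$ and an irregular singular point at $\infty$), satisfying the dictionary $\alpha=\tfrac12\kappa_\infty^2$, $\beta=-\tfrac12\kappa_0^2$, $\gamma=\theta$ up to the chosen normalization. Under this identification the four families of \ref{prop4} become, respectively, the walls $\{\kappa_0\in\mathbb{Z},\ \kappa_\infty-\theta\in\mathbb{Z}\}$, $\{\kappa_\infty\in\mathbb{Z},\ \kappa_0-\theta\in\mathbb{Z}\}$, $\{\theta\in\mathbb{Z},\ \kappa_0-\kappa_\infty\in\mathbb{Z}\}$, and $\{\kappa_0,\kappa_\infty\in\tfrac12+\mathbb{Z},\ \theta\notin\mathbb{Z}\}$, with the parity conditions ($m+n$ odd or even) recording which of two parallel reflection hyperplanes is met. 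The Bäcklund transformations $s_0,s_1,s_2,s_3$ together with the diagram rotation $\pi$ generate the extended affine Weyl group $\widetilde{W}(A_3^{(1)})$, acting birationally on $(y,x,t)$ and affinely on $(a_0,a_1,a_2,a_3)$.

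For the ``if'' direction I would write down the seed solutions living on the reflection hyperplanes $a_i=0$: when a parameter vanishes the Hamiltonian system admits an explicit Riccati (hence rational, often constant) solution, which I would exhibit directly in the $(y,x)$ variables. Since each generator of $\widetilde{W}(A_3^{(1)})$ is a rational map in $(y,x,t)$ and the parameters, applying words in the group carries a seed rational solution to a rational solution at every parameter value in the $\widetilde{W}$-orbit of the walls $a_i=0$. A finite computation identifying these orbits in the $(\kappa_0,\kappa_\infty,\theta)$-coordinates, and translating back through the dictionary above, then reproduces exactly the four families, including the exceptional inequalities ($\alpha\neq0$ when $|m|<n$, and its analogues), which mark the sub-loci where the Bäcklund image degenerates and the candidate solution fails to be a genuine rational function.

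The main obstacle is the converse: that \emph{no} rational solution exists for parameters off these orbits. Here I would argue that a rational solution of $P_V$ forces the associated linear connection to be reducible, so that the monodromy and Stokes data are special; reducibility imposes resonance relations asserting that prescribed $\mathbb{Z}$-linear combinations of $\kappa_0,\kappa_\infty,\theta$ lie in $\mathbb{Z}$, and together with $a_0+\cdots+a_3=1$ these are precisely the defining equations of the walls listed above. An alternative and more elementary route, the one underlying the Lukashevich--Gromak and Kitaev references, avoids the linear problem entirely: one inserts a general rational ansatz, analyzes the admissible leading exponents at $t=0$ and $t=\infty$ forced by the equation, and shows that the resulting indicial and resonance constraints again cut out exactly the four families. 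I expect this pole-balancing converse to be the technically heaviest part, since it requires controlling all possible local behaviours and excluding spurious branches; the affine Weyl symmetry is what keeps the bookkeeping finite, reducing infinitely many candidate parameter values to finitely many orbits of seeds.
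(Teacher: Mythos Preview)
The paper does not prove this statement at all: it is stated as a \emph{Fact} and attributed to Kitaev--Law--McLeod \cite{Kitaev} and Gromak--Laine--Shimomura \cite{Gromak1}. In this paper the classification of rational solutions of $P_V$ is an input from the literature, used (together with Remark~3.8 on uniqueness in case (4)) as a black box in the proof of Proposition~\ref{P4}. So there is no ``paper's own proof'' to compare against.

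That said, your outline is a reasonable sketch of how the cited references actually establish the result. The normalization $\delta=-\tfrac12$ via $t\mapsto\lambda_0 t$ is exactly right and explains the appearance of $\lambda_0\gamma$ and the branch ambiguity. The Kitaev--Law--McLeod argument is closer to your ``alternative and more elementary route'': it proceeds by inserting a rational ansatz, analyzing admissible local behaviours at $t=0$ and $t=\infty$, and using the B\"acklund transformations to reduce to a finite list of seed cases. Your first converse strategy (reducibility of the associated connection forcing resonance relations) is more in the spirit of later isomonodromy-based classifications and would also work, but is not what the cited sources do. One caution: the identification of the four families with specific $\widetilde{W}(A_3^{(1)})$-orbits, and especially the exceptional inequalities like ``$\alpha\neq 0$ when $|m|<n$'', require genuine case-by-case verification rather than a one-line orbit computation; this is where the bulk of the work in \cite{Kitaev} lies, and your sketch correctly flags it as the heaviest part.
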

\begin{rmk}
In case (4) the rational solution is unique. This is also true for most of the other cases (see \cite{Kitaev}).
\end{rmk}

\begin{prop}\label{P4}
The solution set $X(\alpha,\beta,\gamma,\delta)$ of $P_{V}(\alpha,\beta,\gamma,\delta)$, $\alpha,\beta,\gamma,\delta\in\mathbb{C}$ algebraically independent, is strictly disintegrated over $\mathbb{C}(t)$. 
\end{prop}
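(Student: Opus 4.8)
The plan is to run the same argument as for $P_{II}$--$P_{IV}$, now using Fact~\ref{prop4} together with the uniqueness statement for its case (4) (the Remark following Fact~\ref{prop4}) in place of Facts~\ref{prop1}, \ref{prop2}, \ref{prop3}. Since $\alpha,\beta,\gamma,\delta$ are algebraically independent and transcendental we in particular have $\delta\neq 0$, so by the results recalled above every solution of $P_{V}(\alpha,\beta,\gamma,\delta)$ that is algebraic over $\mathbb{C}(t)$ is in fact rational. By Fact~2.8 and Remark~2.4 (together with Fact~2.10) it suffices to show that $acl_{X(\alpha,\beta,\gamma,\delta)}(K_{0},y)=\{y\}$ for every $y\in X(\alpha,\beta,\gamma,\delta)$, where $K_{0}=\mathbb{Q}(\alpha,\beta,\gamma,\delta,t)$.

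So suppose there is $z\neq y$ in that algebraic closure; as in the earlier proofs this is witnessed by a formula $\phi(\alpha,\beta,\gamma,\delta,t,u,v)$ expressing that $v$ is algebraic over $\mathbb{Q}(\alpha,\beta,\gamma,\delta,t,u)$. By Fact~2.8 all solutions of $X(\alpha,\beta,\gamma,\delta)$ are generic over $K_{0}$, hence by quantifier elimination satisfy the same formulas over $K_{0}$ as $y$ does; so the sentence $\sigma(\alpha,\beta,\gamma,\delta,t)$
\[\forall u\big(u\in X(\alpha,\beta,\gamma,\delta)\rightarrow\exists v\,(u\neq v\wedge v\in X(\alpha,\beta,\gamma,\delta)\wedge\phi(\alpha,\beta,\gamma,\delta,t,u,v))\big)\]
is true in $\mathcal{U}$.

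Now I would specialise the parameters one at a time via Fact~2.11, aiming at case (4) of Fact~\ref{prop4}, which has a \emph{unique} rational solution and in which only $\alpha$ and $\beta$ need be pinned down. Treating $\alpha$ as the distinguished variable, Fact~2.11 gives $\mathcal{U}\models\sigma(c_{1},\beta,\gamma,\delta,t)$ for all but finitely many $c_{1}\in\mathbb{C}$; since $\{\tfrac{1}{8}(2m+1)^{2}:m\in\mathbb{Z}\}$ is infinite we may take $c_{1}=\tfrac{1}{8}(2m+1)^{2}$ outside the finite exceptional set. As $\beta,\gamma,\delta,t$ are still algebraically independent (over $\mathbb{Q}(c_{1})=\mathbb{Q}$), Fact~2.11 applies again and lets us replace $\beta$ by $c_{2}=-\tfrac{1}{8}(2n+1)^{2}$ for a suitable $n\in\mathbb{Z}$. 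The remaining parameters $\gamma,\delta$ stay transcendental and algebraically independent, so $\lambda_{0}\gamma=(-2\delta)^{-1/2}\gamma\notin\mathbb{Z}$ for either branch of $\lambda_{0}$ (an equality $\lambda_{0}\gamma=k\in\mathbb{Z}$ would force the algebraic relation $\gamma^{2}+2k^{2}\delta=0$). Hence we are in case (4) of Fact~\ref{prop4}, and $X(c_{1},c_{2},\gamma,\delta)$ has a unique rational solution $u_{0}$. Instantiating $u=u_{0}$ in $\sigma(c_{1},c_{2},\gamma,\delta,t)$ gives $v\neq u_{0}$ in $X(c_{1},c_{2},\gamma,\delta)$ that is algebraic over $\mathbb{Q}(c_{1},c_{2},\gamma,\delta,t,u_{0})\subseteq\mathbb{C}(t)$, hence algebraic over $\mathbb{C}(t)$, hence (as $\delta\neq 0$) rational over $\mathbb{C}(t)$; this contradicts the uniqueness of $u_{0}$.

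Apart from this routine bookkeeping the only delicate point is the choice of target case: one must reach, by specialising as few parameters as possible, a case of the classification of algebraic solutions with exactly one rational solution. Case (4) of Fact~\ref{prop4} is the convenient one, since the Remark after it guarantees uniqueness while only $\alpha,\beta$ are constrained (to rationals which, for all but finitely many $m,n$, avoid any degenerate subfamily) and $\gamma,\delta$ remain transcendental, which is exactly what Fact~2.11 requires. Using case (1) or (2) would additionally require controlling $\gamma$, and case (3) carries a free parameter $a$; case (4) avoids both difficulties.
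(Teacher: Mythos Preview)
Your proof is correct and follows exactly the same route as the paper's own argument: assume failure, build the sentence $\sigma$, apply Fact~2.11 twice to specialise $\alpha$ and $\beta$ into case~(4) of Fact~\ref{prop4}, and contradict the uniqueness statement of the subsequent Remark using that $\gamma,\delta$ remain algebraically independent (so $\lambda_0\gamma\notin\mathbb{Z}$). Your write-up is in fact more careful than the paper's sketch, spelling out why the second application of Fact~2.11 is legitimate and why $\lambda_0\gamma$ cannot be an integer.
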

\begin{proof} 
Assuming not, as in the earlier cases we find a sentence $\sigma(t,\alpha,\beta, \gamma, \delta)$ expressing that for any solution $u$ of $X(\alpha,\beta,\gamma,\delta)$ there is another solution $v\neq u$ which is algebraic over $\mathbb{Q}(t,\alpha,\beta,\gamma,\delta,u)$. By Fact 4.11 (applied twice), we  first find $r = \frac{1}{8}(2m+1)^2$ for some $m\in\mathbb{Z}$, and then $s = -\frac{1}{8}(2n+1)^2$ for some $n\in \mathbb{Z}$ such that $\mathcal{U}\models \sigma(t,r,s,\gamma, \delta)$, and obtain (since  $\gamma$ and $\delta$ are algebraically independent) a contradiction to 3.7(4) and 3.8.
\end{proof}

\section{Generic $P_{VI}$.}
We do not, currently, have any reason to believe that the results for generic $P_{I} - P_{V}$ do not hold for generic $P_{VI}$. But our methods, involving the description of algebraic solutions, as parameters vary, yield a weaker statement: the solution set of generic $P_{VI}$ is $\omega$-categorical, as in Definition 2.7. 


$P_{VI}(\alpha,\beta,\gamma,\delta)$, $\alpha,\beta,\gamma,\delta \in \mathbb{C}$, is given by the following equation
\begin{equation*}
\begin{split}
\partial^2y=&\frac{1}{2}\left(\frac{1}{y}+\frac{1}{y+1}+\frac{1}{y-t}\right)(\partial y)^2-\left(\frac{1}{t}+\frac{1}{t-1}+\frac{1}{y-t}\right)\partial y\\
&+\frac{y(y-1)(y-t)}{t^2(t-1)^2}\left(\alpha+\beta\frac{t}{y^2}+\gamma\frac{t-1}{(y-1)^2}+\delta\frac{t(t-1)}{(y-t)^2}\right)
\end{split}
\end{equation*}
Our result is the following:
\begin{prop}\label{P5} Let $X=X(\alpha,\beta,\delta,\gamma)$ be the solution set of $P_{VI}(\alpha,\beta,\delta,\gamma)$, where $\alpha,\beta,\delta,\gamma$ are algebraically independent, transcendental complex numbers.  Then for any $y\in X$, $acl_X(K,y)$ is finite, where $K=\mathbb{C}(t)$. Consequently as $X$ is geometrically trivial, $X$ is $\omega$-categorical.
\end{prop}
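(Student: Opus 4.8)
The plan is to mimic the argument used for $P_{II}$--$P_V$, but since for generic $P_{VI}$ we only aim at $\omega$-categoricity rather than strict disintegratedness, it suffices (by the commentary after Definition 2.7 and Remark 2.4) to show that for a single solution $y\in X$, only finitely many other solutions lie in $K_0(y,y')^{alg}$, where $K_0=\mathbb{Q}(\alpha,\beta,\gamma,\delta)(t)$. So I would argue by contradiction: suppose $acl_{X}(K_0,y)$ is infinite. Since $X$ is strongly minimal with no algebraic solutions (Fact 2.10), $y$ is generic over $K_0$, and by quantifier elimination all elements of $X$ have the same type over $K_0$. The key quantitative point is that ``$acl_X(K_0,u)$ has more than $N$ elements'' is, for each fixed $N$, expressible by a single $L$-formula in the parameters $(\alpha,\beta,\gamma,\delta,t)$: one writes $\exists v_0\ldots\exists v_N \big(\bigwedge_i v_i\in X \wedge \bigwedge_{i\neq j} v_i\neq v_j \wedge \bigwedge_i \psi(\alpha,\beta,\gamma,\delta,t,u,v_i)\big)$ where $\psi$ witnesses algebraicity, together with the expressibility of $acl$ (the ``trivial model-theoretic fact'' quoted before Fact 2.11). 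Using infinitude of $acl_X(K_0,y)$ and genericity, one gets a sentence $\sigma_N(\alpha,\beta,\gamma,\delta,t)$, for every $N$, asserting that every $u\in X$ has at least $N+1$ distinct solutions algebraic over $\mathbb{Q}(\alpha,\beta,\gamma,\delta,t,u)$.

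Next I would apply Fact 2.11 repeatedly (once for each of the four parameters) to specialize $\alpha,\beta,\gamma,\delta$ to specific complex values landing in the locus where $P_{VI}$ has algebraic solutions, while preserving the truth of $\sigma_N$. The crucial input here, replacing Facts 3.3, 3.5, 3.7 used for the earlier families, is Boalch's classification of algebraic solutions of $P_{VI}$: in particular the ``generic icosahedral solution'' gives a Zariski-dense (indeed infinite, in each parameter separately) family of parameter values for which $P_{VI}$ has an algebraic-over-$\mathbb{C}(t)$ solution, and crucially for which the total number of algebraic solutions is uniformly bounded --- the relevant bound being at most $11$ (hence the $11k$ in the introduction). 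Concretely I expect to specialize the parameters so that we land on an icosahedral (or otherwise classified) point where $X$ has at least one algebraic solution but at most $11$ of them; then choosing $u$ in $\sigma_N$ to be an algebraic solution and taking $N=11$ yields $12$ distinct algebraic solutions of that specialized $P_{VI}$, contradicting the classification. This establishes finiteness of $acl_X(K_0,y)$, and then $\omega$-categoricity follows formally, as does the stated consequence via geometric triviality (Fact 2.10).

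The main obstacle, and the place where genuine input beyond the $P_{II}$--$P_V$ template is needed, is step two: ensuring that one can specialize \emph{all four} parameters --- using Fact 2.11 applied successively, which only ever lets us move one parameter at a time while keeping the others algebraically independent transcendentals --- and still land in a region of the parameter space where (a) there exists an algebraic solution and (b) there is a uniform finite bound on the number of algebraic solutions. This is exactly why Boalch's generic icosahedral solution is invoked: it is the algebraic solution whose parameters are ``generic enough'' (three of the four parameters, or suitable coordinates, remain free) that the successive specialization afforded by Fact 2.11 can actually reach it, and its orbit under the Okamoto/Bäcklund transformations has controlled (finite, uniformly bounded) size. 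Verifying that the classification literature indeed gives such a configuration --- an infinite family of specializations, compatible with one-parameter-at-a-time specialization, carrying a bounded number of algebraic solutions --- is the technical heart of the proof; everything else is the same elementary model theory (quantifier elimination, expressibility of $acl$, Fact 2.11) already deployed in Section 3.
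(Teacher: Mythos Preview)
Your proposal is correct and follows essentially the same route as the paper: assume the algebraic closure over one solution is too large, extract a first-order sentence $\sigma_N$, and use Fact~2.11 four times to specialize the parameters onto (a Bäcklund-translated lattice of) Boalch's generic icosahedral point, where the known count of algebraic solutions yields a contradiction. The only discrepancy is numerical: Boalch's equation $S_{VI}(1/2,-1/5,1/3,2/5)$ has exactly $12$ algebraic solutions (not $11$), so the paper bounds $|acl_X(K_0,y)|\leq 12$ directly rather than arguing from infinitude --- the ``$11k$'' in the introduction counts the \emph{other} solutions.
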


We will prove the proposition by again making use of part the classification of algebraic solution of $P_{VI}$ (see \cite{Lisovyy}). However to state the result we need, we first recall a few facts about $P_{VI}$.\\

In its hamiltonian form, $P_{VI}$ is given by
\[S_{VI}({\bar{\alpha}})\left\{
\begin{array}{rll}
\partial y&=&dH/dx\\
\partial x&=&-dH/dy
\end{array}\right.\]
where $H({\bar{\alpha},\rho})=\frac{1}{t(t-1)}\{y(y-1)(y-t)x^2-x(\alpha_4(y-1)(y-t)+\alpha_3y(y-t)+(\alpha_0-1)y(y-1))+\alpha_2(\alpha_2+\alpha_1)(y-t)\}$ and $\alpha_0+\alpha_1+2\alpha_2+\alpha_3+\alpha_4=1$. The parameters $\alpha,\beta,\delta,\gamma$ of $P_{VI}$ are related to the $\bar{\alpha}$ as follows: $\alpha=\frac{1}{2}\alpha_1^2$, $\beta=-\frac{1}{2}\alpha_4^2$, $\gamma=\frac{1}{2}\alpha^2_3$ and $\delta=\frac{1}{2}(1-\alpha^2_0)$.
\newline
Let us note that any solution $y$ of $P_{IV}(\alpha,\beta,\gamma, \delta)$  yields  a unique solution $(y,x)$ of $S_{IV}(\bar{\alpha})$. The only possible solutions $(y,x)$ of $S_{IV}(\bar{\alpha})$ not of this form are when $y=0,1,t$ and such solutions will exhibit non strong minimality of $S_{IV}(\bar{\alpha})$  (even though $P_{IV}(\alpha,\beta,\gamma, \delta)$ may be strongly minimal). 
\\

Let us note also that the relation between the parameters $\bar{\alpha}$ above and the parameters $(a_{1},a_{2},a_{3},a_{4})$ in Watanabe's Hamiltonian vector field for $P_{VI}$ (beginning of section 3 of \cite{Watanabe}) is:  $\alpha_{4} = a_{3} + a_{4}$, $\alpha_{3} = a_{3} - a_{4}$, $\alpha_{0} = 1 - a_{1} - a_{2}$, and $\alpha_{1} = a_{1} - a_{2}$. \\

We now describe some ``Backlund transformations" for $S_{VI}(\bar{\alpha})$.

\begin{table}[h]
\begin{center}
\begin{tabular}{|c|c|c|c|c|c|c|c|}
\hline
 & $\alpha_0$ & $\alpha_1$ & $\alpha_2$ & $\alpha_3$ & $\alpha_4$ & $y$ & $x$ \\
\hline
$s_0$ & $-\alpha_0$ & $\alpha_1$ & $\alpha_2+\alpha_0$ & $\alpha_3$ & $\alpha_4$ & $y$ & $x-\frac{\alpha_0}{y-t}$ \\
$s_1$ & $\alpha_0$ & $-\alpha_1$ & $\alpha_2+\alpha_1$ & $\alpha_3$ & $\alpha_4$ & $y$ & $x$ \\
$s_2$ & $\alpha_0+\alpha_2$ & $\alpha_1+\alpha_2$ & $-\alpha_2$ & $\alpha_3+\alpha_2$ & $\alpha_4+\alpha_2$ & $y+\frac{\alpha_2}{x}$ & $x$ \\
$s_3$ & $\alpha_0$ & $\alpha_1$ & $\alpha_2+\alpha_3$ & $-\alpha_3$ & $\alpha_4$ & $y$ & $x-\frac{\alpha_3}{y-1}$ \\
$s_4$ & $\alpha_0$ & $\alpha_1$ & $\alpha_2+\alpha_4$ & $\alpha_3$ & $-\alpha_4$ & $y$ & $x-\frac{\alpha_4}{y}$ \\
\hline
\end{tabular}
\caption[Some Backlund Transformations]{Some Backlund Transformations for $S_{VI}$}
\label{Table}
\end{center}
\end{table}
Recall that the Backlund transformations map solutions of a given $S_{VI}$ equation to solutions of the same equation with different values of parameters $\bar{\alpha}$, but clearly may be undefined  at certain solutions.  The list of the Backlund transformations we are interested in are given in Table \ref{Table}. The five transformations $s_0,s_1,s_2,s_3,s_4$ generate a group $\mathcal{W}$ which is isomorphic to the affine Weyl group of
type $D_4$ and which is sometimes referred to as Okamoto's affine $D_4$ symmetry group. By definition, the reflecting hyperplanes of Okamoto's affine $D_4$ action are given by the affine linear relations
\newline
$\alpha_{i} = n$  for $i=0,1,3,4$ and 
$n\in \mathbb{Z}$, 
\newline
as well as 
\newline 
$\alpha_0\pm\alpha_1\pm\alpha_3\pm\alpha_4=2n+1$ for $n\in \mathbb{Z}$. 

Let $\mathcal{M}$ be the union of all these hyperplanes. Then as proven in \cite{Watanabe} by Watanabe (see Theorem 2.1(v)), and discussed in \cite{NagPil}, if $(\alpha_0,\alpha_1,\alpha_3,\alpha_4)\not\in\mathcal{M}$ then the solution set of $S_{VI}(\bar{\alpha})$ is strongly minimal (equivalently Umemura's $J$-condition holds). 

\begin{rem} 
(i) Let $t_{0} = s_{0}s_{2}(s_{1}s_{3}s_{4}s_{2})^{2}$, $t_{1} = s_{1}s_{2}(s_{0}s_{3}s_{4}s_{2})^{2}$, $t_{3} = s_{3}s_{2}(s_{0}s_{1}s_{4}s_{2})^{2}$, and $t_{4} = s_{4}s_{2}(s_{0}s_{1}s_{3}s_{2})^{2}$.  Then for $i=0,1,3,4$, and parameters $(\alpha_{0},\alpha_{1},\alpha_{3}\alpha_{4})$, $t_{i}(\alpha_{i}) = \alpha_{i}-2$, and $t_{i}(\alpha_{j}) = \alpha_{j}$.  Hence the orbit of $(\alpha_{0},\alpha_{1},\alpha_{3},\alpha_{4})$ under
$\mathcal{W}$ includes  $\{(\alpha_{0}-2\mathbb{Z}, \alpha_{1}-2\mathbb{Z}, \alpha_{3}-2\mathbb{Z}, \alpha_{4}-2\mathbb{Z})\}$. 
\newline
(ii) If $(\alpha_{0},\alpha_{1},\alpha_{3},\alpha_{4}) \notin \mathcal{M}$ then its orbit under $\mathcal{W}$ also avoids $\mathcal{M}$.
\newline
(iii) If $\bar{\alpha} = (\alpha_{0},\alpha_{1},\alpha_{3},\alpha_{4}) \notin \mathcal{M}$, then each $s_{i}$, $i=0,1,2,3,4$ establishes a {\em bijection} between the  solutions of  $S_{VI}(\bar{\alpha})$ and  $S_{VI}(s_{i}(\bar{\alpha}))$.
\end{rem}
\begin{proof} (iii) This is because the solution sets of both $S_{VI}(\bar{\alpha})$  and $S_{VI}(s_{i}(\bar{\alpha}))$  are strongly minimal, hence neither has a solution of form $(y,x)$ with $y=0,1$ or $t$, or $x=0$. So $s_{i}$ is defined on all solutions. Using the fact that $s_{i}^{2}$ is the identity for each $i$ we obtain the desired conclusion. 
\end{proof}

The key result is Boalch's ``generic icosahedral solution": see Section 6 of \cite{Boalch}.
\begin{fct}
The equation $S_{VI}(1/2, -1/5, 1/3, 2/5)$ has exactly $12$ algebraic solutions  (of course all in $\mathbb{Q}(t)^{alg}$). Moreover
$(1/2, 4/5, 1/3, 2/5)\notin \mathbb{M}$. 
\end{fct}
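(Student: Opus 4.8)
The statement splits into two logically independent assertions, and the plan is to establish each separately. The exact count of twelve algebraic solutions of $S_{VI}(1/2,-1/5,1/3,2/5)$ cannot be recovered by the elementary methods of this paper; it is exactly the content of Boalch's construction and enumeration of the generic icosahedral solution in Section~6 of \cite{Boalch}, and our job is to transcribe it faithfully into the Hamiltonian normalization $S_{VI}(\bar\alpha)$ used here. The clause $(1/2,4/5,1/3,2/5)\notin\mathcal{M}$ is, by contrast, a finite verification against the explicit list of reflecting hyperplanes.

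For the count I would proceed as follows. Under the isomonodromy/Riemann--Hilbert correspondence, algebraic solutions of $P_{VI}$ correspond to finite orbits of the nonlinear (braid group) monodromy action on conjugacy classes of monodromy triples, and the icosahedral solutions are precisely those whose monodromy generates the binary icosahedral subgroup of $SL_2(\mathbb{C})$. Boalch writes the generic icosahedral solution explicitly as an algebraic function $y=y(t)$; passing through the dictionary $\alpha=\tfrac12\alpha_1^2$, $\beta=-\tfrac12\alpha_4^2$, $\gamma=\tfrac12\alpha_3^2$, $\delta=\tfrac12(1-\alpha_0^2)$ together with $\alpha_0+\alpha_1+2\alpha_2+\alpha_3+\alpha_4=1$ places it at $(\alpha_0,\alpha_1,\alpha_3,\alpha_4)=(1/2,-1/5,1/3,2/5)$, and the size of the associated braid orbit, equivalently the number of branches of the algebraic curve over the $t$-line, is twelve. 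Because the classification of algebraic solutions of $P_{VI}$ is by now complete (see \cite{Lisovyy} and \cite{Boalch}), these twelve branches exhaust the algebraic solutions at this parameter, so there are exactly twelve. Each branch determines one point $(y,x)$ of the Hamiltonian system, and since the equation is defined over $\mathbb{Q}$ its algebraic solutions are permuted by Galois conjugation over $\mathbb{Q}(t)$; hence all twelve lie in $\mathbb{Q}(t)^{alg}$. This last point is in any case automatic: the parameter lies off $\mathcal{M}$, so $S_{VI}$ is strongly minimal there, and by the orthogonality-to-constants argument of Remark~2.4 any solution algebraic over $\mathbb{C}(t)$ is already algebraic over $\mathbb{Q}(t)$.

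The second assertion is the finite check. At $(1/2,4/5,1/3,2/5)$ none of the four coordinates is an integer, so the hyperplanes $\alpha_i=n$ are all avoided; and for the hyperplanes $\alpha_0\pm\alpha_1\pm\alpha_3\pm\alpha_4=2n+1$, each sign choice yields $\tfrac12$ plus a number of denominator dividing $15$, hence a fraction with odd numerator over $30$, which never reduces to an integer, let alone an odd one. Thus $(1/2,4/5,1/3,2/5)\notin\mathcal{M}$. The same computation gives $(1/2,-1/5,1/3,2/5)\notin\mathcal{M}$, so $S_{VI}$ is strongly minimal at the parameter of the equation as well; the two tuples differ only by the integer shift $4/5=-1/5+1$, reflecting the difference between Boalch's monodromy-exponent conventions and the present normalization, and what matters for the sequel is only that both lie off $\mathcal{M}$.

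The genuine obstacle is wholly contained in the first assertion: the number twelve rests on Boalch's geometric enumeration of the icosahedral braid orbit, which is outside the scope of the techniques of this paper. The real work is therefore (a) fixing the correspondence between Boalch's parameter conventions and the Hamiltonian normalization so that the translated parameter is indeed $(1/2,-1/5,1/3,2/5)$, and (b) confirming that Boalch's twelve is a count of solutions of the Hamiltonian system (branches of the curve) rather than of Okamoto-equivalence classes of them. Granting these, the descent to $\mathbb{Q}(t)^{alg}$ and the membership test for $\mathcal{M}$ are both routine.
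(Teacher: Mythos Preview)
Your reading is right on both counts. The paper gives no argument for this Fact at all: it is introduced with ``The key result is Boalch's `generic icosahedral solution': see Section 6 of \cite{Boalch}'' and simply recorded, so your approach---defer the count of twelve entirely to Boalch's enumeration and do the routine hyperplane check by hand---is exactly what the paper does, only with more of the background spelled out. Your arithmetic for $(1/2,4/5,1/3,2/5)\notin\mathcal{M}$ is correct (over denominator $30$ the eight signed sums $15\pm24\pm10\pm12$ are $61,37,41,17,13,-11,-7,-31$, none a multiple of $30$), and you are right to flag the $-1/5$ versus $4/5$ discrepancy as a convention or typographical issue rather than a mathematical one, since both tuples lie off $\mathcal{M}$.
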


By Remark 4.2 we conclude:
\begin{cor}
Let $\bar{\alpha} \in \{(1/2-2\mathbb{Z},-1/5-2\mathbb{Z},1/3-2\mathbb{Z},2/5-2\mathbb{Z})$. Then  $S_{VI}(\bar{\alpha})$ has precisely $12$ algebraic solutions (again necessarily in $\mathbb{Q}(t)^{alg}$). 
\end{cor}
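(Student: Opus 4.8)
The plan is to transport Boalch's count of $12$ from the base point $\bar\alpha_{0}=(1/2,-1/5,1/3,2/5)$ to an arbitrary tuple $\bar\alpha$ in the given set along a chain of Backlund transformations, using that each generator $s_{i}$ is a bijection between the full solution sets of its source and target equations whenever both parameter tuples lie off the reflecting hyperplanes $\mathcal{M}$ (Remark 4.2(iii)).

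First I would record from Fact 4.3 that $S_{VI}(\bar\alpha_{0})$ has exactly $12$ algebraic solutions, all in $\mathbb{Q}(t)^{alg}$, and that $\bar\alpha_{0}\notin\mathcal{M}$. Given $\bar\alpha\in\{(1/2-2\mathbb{Z},-1/5-2\mathbb{Z},1/3-2\mathbb{Z},2/5-2\mathbb{Z})\}$, Remark 4.2(i) provides $w\in\mathcal{W}$ with $w(\bar\alpha_{0})=\bar\alpha$; I would write $w=s_{i_{\ell}}\cdots s_{i_{1}}$ as a word in the five generators and set $\bar\alpha^{(k)}=s_{i_{k}}\cdots s_{i_{1}}(\bar\alpha_{0})$, so that $\bar\alpha^{(0)}=\bar\alpha_{0}$, $\bar\alpha^{(\ell)}=\bar\alpha$, and every $\bar\alpha^{(k)}$ lies in the $\mathcal{W}$-orbit of $\bar\alpha_{0}$, hence avoids $\mathcal{M}$ by Remark 4.2(ii). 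Then Remark 4.2(iii) applies at each stage: $s_{i_{k}}$ is a bijection from the solution set of $S_{VI}(\bar\alpha^{(k-1)})$ onto that of $S_{VI}(\bar\alpha^{(k)})$. Composing these yields a bijection $\Phi$ from the solution set of $S_{VI}(\bar\alpha_{0})$ onto that of $S_{VI}(\bar\alpha)$, whose inverse is built the same way from the word $s_{i_{1}}\cdots s_{i_{\ell}}$, each $s_{i}$ being an involution once the parameters are off $\mathcal{M}$.

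It then remains to check that $\Phi$ matches up algebraic solutions on the two sides. Each $s_{i}$ is given by a rational expression in $y,x,t$ with coefficients in $\mathbb{Q}(\bar\alpha^{(k)})$ (see the table of Backlund transformations above), and every $\bar\alpha^{(k)}$ is a rational tuple, so $\Phi$ and $\Phi^{-1}$ are rational over $\mathbb{Q}(t)$; in particular they carry solutions algebraic over $\mathbb{C}(t)$ to solutions algebraic over $\mathbb{C}(t)$, and carry $\mathbb{Q}(t)^{alg}$-rational solutions to $\mathbb{Q}(t)^{alg}$-rational ones. Hence $\Phi$ restricts to a bijection between the algebraic solutions of $S_{VI}(\bar\alpha_{0})$ and those of $S_{VI}(\bar\alpha)$, so the latter number exactly $12$ and lie in $\mathbb{Q}(t)^{alg}$.

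The only delicate point — and the one I would be most careful about — is the input to Remark 4.2(iii): that each $s_{i}$ is everywhere-defined and involutive on the solution set at the parameter in question, so that no algebraic solution is spuriously created or destroyed when the transformation passes formally through a solution with $y\in\{0,1,t\}$ or $x=0$. This rests on strong minimality of $S_{VI}(\bar\alpha^{(k)})$ for every $k$, which holds precisely because each $\bar\alpha^{(k)}\notin\mathcal{M}$, ruling out exactly those degenerate solutions. Granting this, the remainder is a routine composition with no further obstacle.
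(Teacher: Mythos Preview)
Your proof is correct and follows exactly the route the paper intends: the paper's own argument is the single line ``By Remark 4.2 we conclude,'' and you have faithfully unpacked this by chaining the generators of $\mathcal{W}$ via 4.2(i), keeping every intermediate parameter off $\mathcal{M}$ via 4.2(ii), and invoking the bijection in 4.2(iii) at each step. Your explicit check that the (rational-over-$\mathbb{Q}(t)$) Backlund transformations carry algebraic solutions to algebraic solutions is a point the paper leaves implicit, and your attention to why no degenerate solutions with $y\in\{0,1,t\}$ or $x=0$ interfere is precisely the content of the proof of 4.2(iii).
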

This is enough for us to prove our result:
\begin{proof}[Proof of Proposition \ref{P5}]
For algebraically independent transcendental $\alpha, \beta, \gamma, \delta \in \mathbb{C}$. The solutions of $P_{IV}(\alpha,\beta,\gamma,\delta)$ are in bijection with those of the $S_{VI}(\alpha_{0},\alpha_{1},\alpha_{3},\alpha_{4})$ (where the $\alpha_{i}$ are related to $\alpha,\beta, \gamma, \delta$ as stated above) via  $y \to (y,x)$. (Because $\alpha_{0},\alpha_{1},\alpha_{3},\alpha_{4}$ are also algebraically independent, so $\bar{\alpha}\notin \mathbb{M}$ and $S_{VI}(\bar{\alpha})$ is strongly minimal.) Without ambiguity we denote a solution of a system $S_{VI}(-)$ by $y$.
Let now $X(\bar{\alpha})$ denote the solution set of  $S_{VI}(\bar{\alpha})$  (likewise for other parameters). Let $y\in X(\bar{\alpha})$. As before (using Remark 2.4), it is enough to work over $K_{0} = \mathbb{Q}(\bar{\alpha}, t)$. We know $y$ (like all elements of $X(\bar{\alpha})$) is generic over $K_{0}$.
\newline
{\em Claim.} $acl_{X(\bar{\alpha})}(K_{0},y)$ has cardinality at most $12$ (including $y$ itself). 
\newline
{\em Proof.} The same argument as before: if not then we find a true sentence $\sigma(\alpha_{0},\alpha_{1},\alpha_{3},\alpha_{4},t)$ expressing that for any solution $y$ of $S_{VI}(\bar{\alpha})$, there are at least $12$ other solutions in the algebraic closure of $\mathbb{Q}(\bar{\alpha},t,y)$
(i.e. in $(\mathbb{Q}(\bar{\alpha},t,y,y')^{alg}$). Applying Fact 2.11, we find (one by one), $(r_{0},r_{1},r_{3},r_{4})\in \{(1/2-2\mathbb{Z},-1/5-2\mathbb{Z},1/3-2\mathbb{Z},2/5-2\mathbb{Z})\}$ such that $\mathbb{U} \models \sigma(r_{0},r_{1},r_{3},r_{4},t)$. But then choosing $y$ to be one of the algebraic solutions of $S_{VI}(r_{0},r_{1},r_{3},r_{4})$ we obtain at least $12$ other algebraic solutions, contradicting Corollary 4.4. This proves the claim and the Proposition.
\end{proof}

\end{document}